\documentclass[11pt]{amsart}



\usepackage{amsmath,amsthm,amssymb}
\usepackage{color}

\usepackage{enumitem}
\usepackage{float}
\usepackage[T1]{fontenc}
\usepackage{graphics}
\usepackage{fullpage}
\usepackage[colorlinks,linkcolor=blue]{hyperref}
\usepackage{mathtools}
\usepackage{MnSymbol}
\usepackage{multicol}
\usepackage{parskip}
\usepackage{comment}

\usepackage{tikz}
\usetikzlibrary{cd}
\usetikzlibrary{decorations.markings}
\usepackage{wrapfig}

\usepackage[colorinlistoftodos,prependcaption]{todonotes}
\usepackage[all]{xy}

\makeatletter
\DeclareRobustCommand{\em}{%
  \@nomath\em \if b\expandafter\@car\f@series\@nil
  \normalfont \else \bfseries\itshape \fi}
\makeatother

\newtheorem{theorem}[subsection]{Theorem}

\newtheorem{corollary}[subsection]{Corollary}
\newtheorem{lemma}[subsection]{Lemma}
\newtheorem{proposition}[subsection]{Proposition}

\newtheorem{question}{Question}
\theoremstyle{definition}

\newtheorem{example}[subsection]{Example}
\theoremstyle{remark}
\newtheorem{remark}[subsection]{Remark}

\numberwithin{equation}{section}
\numberwithin{figure}{section}

\newcommand{\B}[1]{{\mathbf #1}}
\newcommand{\C}[1]{{\mathcal #1}}

\newcommand{\OP}{\operatorname}

\definecolor{darkgreen}{rgb}{0.0, 0.5, 0.0}

\begin{document}
\title{Title}
\title[On quasimorphisms on homeomorphism groups]{On quasimorphisms and distortion in homeomorphism groups}
\author{Michael Brandenbursky}\thanks{M.B. was partially supported by the Israel Science Foundation grant 823/23}
\address{Department of Mathematics, Ben Gurion University, Israel}
\email{brandens@bgu.ac.il} 
\author{Jarek K\k{e}dra}
\address{Department of Mathematics, University of Aberdeen, UK and University of Szczecin, Poland}
\email{kedra@abdn.ac.uk}
\author{Michał Marcinkowski}
\address{Department of Mathematics, University of Wrocław, Poland}
\email{michal.marcinkowski@uni.wroc.pl}
\author{Egor Shelukhin}
\address{Department of Mathematics, University of Montreal, Canada}
\email{egorshel@gmail.com}

\begin{abstract}
\noindent Let $M$ be a smooth
compact oriented connected manifold, and ${\rm Homeo}_0(M,\mu)$ the group of homeomorphisms of $M$ supported away from $\partial M,$ which preserve a Borel probability measure $\mu$ induced by a volume form on $M$, and are isotopic to the identity. 
In this paper, we identify those Gambaudo-Ghys and Polterovich quasimorphisms
$\Psi\colon {\rm Diff}_0(M,\mu)\to\B R$ which extend $C^0$-continuously 
to ${\rm Homeo}_0(M,\mu)$ as quasimorphisms, and to ${\rm Homeo}_0(M)$ as group cochains whose differentials are semi-bounded cocycles.\\
\noindent We present several applications of this result which include unboundedness of 
certain bi-invariant metric on the commutator subgroup of ${\rm Homeo}_0(M,\mu)$, and conditions under which a homeomorphism in ${\rm Homeo}_0(M)$ is undistorted.
\end{abstract}

\maketitle

\section{Introduction}\label{S:intro}
Let $M$ be a smooth compact oriented connected manifold equipped with a  Borel probability 
measure $\mu$ defined by a volume form on $M$.
Let ${\rm Homeo}_0(M,\mu)$ denote the group of  
measure preserving homeomorphisms of $M$ which act by the  identity homeomorphism on a neighborhood of $\partial M$
and are isotopic to the identity. Let $P_n(M,z) = \pi_1({\rm C}_n(M),z)$ be the pure braid
group on $M$.  Here ${\rm C}_n(M) \subset M^n$ denotes the space of ordered
configurations of $n$ points in $M$, equipped with the product ${\rm Homeo}_0(M)$ action. 
If $\dim(M)=2$ then we let $n\in \B N$ be an
arbitrary number, and when $\dim(M)\geq 3,$ we assume $n=1$ throughout the paper. That is, for
higher dimensional manifolds we consider only their fundamental groups $P_1(M,z)=\pi_1(M,z)$.  

Let $\gamma\, \colon {\rm Homeo}_0(M)\times C_n(M)\to P_n(M)$ be a measurable cocycle defined by 
$$
\gamma(f,x) = \left[ \ell_{z,f(x)}*\{f_t(x)\}*\ell_{x,z} \right],
$$
where $\{f_t\}$ is an isotopy from the identity to $f$ and $\ell_{x,y}$ 
is a certain path in ${\rm C}_n(M)$ from $x$ to $y$ which is defined in
Section \ref{S:preliminaries}.  By convention, the concatenation of paths is
read from right to left.

The main object of study in the present paper is a family of maps
$\Psi\colon {\rm Homeo}_0(M)\to \B R$ defined by
\begin{equation}
\Psi(f) = \int_{{\rm C}_n(M)} \varphi(\gamma(f,x))dx,
\label{Eq:GG}
\end{equation}
where $\varphi\colon P_n(M)\to \B R$ is a homogeneous quasimorphism vanishing
on the centre of $P_n(M)$. On ${\rm C}_n(M)$ we consider the product measure $\mu^n$.  
The formula \eqref{Eq:GG} was used by
Gambaudo-Ghys to define non-trivial quasimorphisms on groups of area preserving
diffeomorphisms of compact surfaces \cite{zbMATH05019142} and by Polterovich for volume preserving 
diffeomorphisms of compact manifolds of higher dimension \cite{zbMATH05016104}. It was further exploited
by several authors in various configurations 
\cite{zbMATH06490154,zbMATH06160238,zbMATH06814502,zbMATH07159382,MR2253051}. In general, it is not
well defined for homeomorphisms and we identify quasimorphisms
$\varphi\colon P_n(M)\to \B R$ for which it is. Here is our main result.

\begin{theorem}\label{T:main}
Let $\varphi\colon P_n(M)\to \B R$ be a non-trivial homogeneous
quasimorphism vanishing on the centre of $P_n(M)$. If $M$ is a surface,
we assume that $M$ is closed of positive genus and $\varphi$ vanishes on the subgroup
$P_n(\Delta)\leq P_n(M)$ of braids supported in an interior of a full measure two-cell $\Delta$, see
Section \ref{S:preliminaries} for definition. Then the 
map $\Psi \colon {\rm Homeo}_0(M)\to \B R$ given by \eqref{Eq:GG} 
is well defined if and only if $\varphi\colon P_n(M)\to \B R$ is as above, and moreover:

\begin{enumerate}
\item The differential $\delta\Psi$ is a semi-bounded $2$-cocycle, that is
$$
D(f):=\sup_{g} |\delta\Psi(f,g)| = \sup_g |\Psi(g) - \Psi(fg) + \Psi(f)| < \infty.
$$
\item
The restriction of $\Psi$ to the subgroup of homeomorphisms preserving the measure
$\mu$ is a quasimorphism whose homogenization is $C^0$-continuous.
\item 
If $\varphi$ extends to an unbounded quasimorphism on the full braid group $B_n(M)$ 
then the homogeneous quasimorphism from the previous item is non-trivial.
\end{enumerate}
\end{theorem}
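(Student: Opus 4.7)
The plan is to establish the three items in sequence, together with the implicit well-definedness of $\Psi$, building on the cocycle identity $\gamma(fg,x) = \gamma(f,g(x))\cdot\gamma(g,x)$ in $P_n(M)$, which follows from concatenating the isotopies $\{g_t\}$ and $\{f_t\}$. The first and most important step is to show that for each $f\in {\rm Homeo}_0(M)$ the function $x\mapsto \varphi(\gamma(f,x))$ is measurable on ${\rm C}_n(M)$ and uniformly bounded by a constant $C(f)$. Measurability is routine from continuity of $f_t$ and of the path system $\ell_{x,y}$. For the bound, the hypothesis that $\varphi$ vanishes on $P_n(\Delta)$ is decisive: since $\Delta$ has full measure, for almost every $x\in {\rm C}_n(M)$ one can write $\gamma(f,x)=\alpha_x^{-1}\cdot\beta_f\cdot\alpha_x\cdot\eta_x$ modulo the centre, where $\alpha_x, \eta_x\in P_n(\Delta)$ and $\beta_f\in P_n(M)$ depends only on $f$, encoding how the isotopy winds around the handles of $M$. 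Conjugation-invariance of homogeneous quasimorphisms, vanishing on $P_n(\Delta)$ and on the centre, together with bounded defect, then yield $|\varphi(\gamma(f,x))|\leq |\varphi(\beta_f)| + c\,D(\varphi)$ uniformly in $x$. In the higher-dimensional case $P_n(M) = \pi_1(M)$ and $\Delta$ is a contractible full-measure open set; the argument is similar and simpler.

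For semi-boundedness (1), inserting the cocycle identity into $\delta\Psi(f,g)$ and using the defect $|\varphi(ab)-\varphi(a)-\varphi(b)|\leq D(\varphi)$ pointwise yields a decomposition $\delta\Psi(f,g) = I_1(f,g) + I_2(f,g)$, where
\[
I_1(f,g) = \int_{{\rm C}_n(M)} \bigl[\varphi(\gamma(f,g(x)))+\varphi(\gamma(g,x))-\varphi(\gamma(fg,x))\bigr]\,d\mu^n(x)
\]
has integrand bounded pointwise by $D(\varphi)$ (so $|I_1(f,g)|\leq D(\varphi)$), and
\[
I_2(f,g) = \int_{{\rm C}_n(M)} \varphi(\gamma(f,y))\,d(\mu^n - g_*\mu^n)(y)
\]
is bounded by $C(f)\,\|\mu^n - g_*\mu^n\|_{\rm TV}\leq 2C(f)$ using the first step and the trivial total-variation estimate. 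Hence $D(f)\leq D(\varphi) + 2C(f)<\infty$. When $g\in{\rm Homeo}_0(M,\mu)$ the measure $g_*\mu^n$ equals $\mu^n$, so $I_2(f,g)=0$ and $|\delta\Psi(f,g)|\leq D(\varphi)$, proving that $\Psi|_{{\rm Homeo}_0(M,\mu)}$ is a quasimorphism.

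For $C^0$-continuity of the homogenization $\overline\Psi$, suppose $f_k\to f$ in $C^0$ within ${\rm Homeo}_0(M,\mu)$. For any fixed $N$ and almost every $x$ the pure braid $\gamma(f_k^N,x)$ equals $\gamma(f^N,x)$ once $k$ is large (braid classes stabilize under small $C^0$-perturbations at a fixed configuration), so dominated convergence with the bound $C(f^N)$ gives $\Psi(f_k^N)\to \Psi(f^N)$. Combined with $|\overline\Psi(\cdot)-\Psi(\cdot^N)/N|\leq D(\varphi)/N$, letting $N$ be large yields $\overline\Psi(f_k)\to\overline\Psi(f)$. For non-triviality (3), given an unbounded extension of $\varphi$ to $B_n(M)$ with homogenization $\overline\varphi$, I would pick $\beta\in B_n(M)$ with $\overline\varphi(\beta^m)\neq 0$ for a power with $\beta^m\in P_n(M)$, construct $f\in{\rm Homeo}_0(M,\mu)$ supported in a small embedded disk that realizes $\beta^m$ on $n$ marked points inside the disk, and check that $\overline\Psi(f^m)$ is proportional to $\overline\varphi(\beta^m)\neq 0$. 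The main obstacle is the first step: producing the canonical rewriting of $\gamma(f,x)$ modulo $P_n(\Delta)$ and the centre demands a careful analysis of the path system $\ell_{x,y}$ and depends essentially on the vanishing hypothesis on $\varphi$, without which the integrand can be genuinely unbounded.
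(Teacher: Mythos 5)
Your semi-boundedness computation for item (1) is correct and is essentially the same estimate the paper derives, repackaged as the nice split $I_1+I_2$ with the total-variation bound on $I_2$. The overall architecture (well-definedness first, then (1)--(3)) matches the paper, and the non-triviality sketch is in the same spirit as the references the paper invokes. But two of the remaining steps have genuine problems.

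The decomposition you assert for the key boundedness step is almost certainly false as stated. You claim that for a.e.\ $x$ one can write $\gamma(f,x)=\alpha_x^{-1}\beta_f\alpha_x\eta_x$ modulo the centre with $\alpha_x,\eta_x\in P_n(\Delta)$ and a \emph{single} $\beta_f$ depending only on $f$. Already for $n=1$ on a genus-$\geq 2$ surface and a fixed isotopy $\{f_t\}$, the loops $\{f_t(x)\}$ will typically represent \emph{different} conjugacy classes in $\pi_1$ for $x$ in different regions, so there is no single $\beta_f$. What the paper actually proves is the weaker but correct statement: for $C^0$-small $f$ (displacement less than $\tfrac{1}{2n}\operatorname{sys}(\Sigma,d)$ after choosing a good isotopy), one can write $\gamma(f,x)=\alpha\cdot\beta$ with $\beta\in P_n(\Delta)$ and $\alpha$ drawn from a \emph{finite} set, by the back-and-forth trick of inserting short geodesic segments that return $f(x)$ into $\Delta$, pushing the short-displacement part of the isotopy into $\Delta$, and observing that the residual strands are concatenations of three geodesic segments of length bounded by $\operatorname{diam}(\Sigma)$. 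A general $f$ is then handled by factoring its isotopy into $C^0$-small pieces $g_1\cdots g_m$ and applying the cocycle identity $m$ times, which bounds $\varphi(\gamma(f,x))$ by a finite sum of boundedly many values plus $(m-1)D_\varphi$. You flag this step as the ``main obstacle,'' correctly, but the structure you propose would not be the one to fill it.

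Your $C^0$-continuity argument also has a gap. You claim that for fixed $N$ and a.e.\ $x$ the braid $\gamma(f_k^N,x)$ coincides with $\gamma(f^N,x)$ once $k$ is large. There is no reason for this stabilization: $\gamma(f_k^N,x)$ is computed from an isotopy of $f_k^N$ to the identity, and even if $f_k\to f$ in $C^0$, these braids need not agree with $\gamma(f^N,x)$ for large $k$ (they only differ by elements of $P_n(\Delta)$ and the centre after a further decomposition). The paper avoids this entirely: $\Psi$ is bounded on a $C^0$-neighborhood of the identity by the finite-set argument above, hence so is the homogenization $\overline{\Psi}$ (since $\sup_f|\overline{\Psi}(f)-\Psi(f)|\leq D_\Psi$), and then Shtern's theorem on homogeneous quasimorphisms (continuous iff bounded near identity) gives $C^0$-continuity directly. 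This is both more robust and shorter than a dominated-convergence argument, which would anyway require the pointwise convergence you have not justified.
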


\begin{remark}\label{R:existence_of_psi}
Let $\Sigma$ be a closed surface of a positive genus and $n>1$. Recall that $B_n(\Sigma)/Z(B_n(\Sigma))$,
where $Z(B_n(\Sigma))$ is the center of $B_n(\Sigma)$, is a non-reducible subgroup  
of the $n$ punctured mapping class group $\operatorname{MCG}_n$ 
\cite[Corollary 7.13]{MR1195787} and obviously $P_n(\Delta)\leq \operatorname{MCG}_n$
is a reducible subgroup. It follows from Bestvina-Fujiwara construction \cite[Theorem 12]{zbMATH01928904} 
that the space of quasimorphisms on $B_n(\Sigma)$ 
which vanish on $P_n(\Delta)$ is infinite-dimensional. 
Hence there are infinitely many linearly independent quasimorphisms 
$\varphi\colon B_n(\Sigma)\to \B R$ which satisfy conditions of Theorem \ref{T:main}.

Let $q:\pi_1(\Sigma)\to\B R$ be a quasimorphism. It defines 
a quasimorphism $\varphi:P_n(\Sigma)\to\B R$ as follows. Let 
$$i_*:P_n(\Sigma)\to(\pi_1(\Sigma))^n$$
be the homomorphism induced by the inclusion $i: {\rm C}_n(\Sigma)\to(\Sigma)^n$. 
We define $\varphi=q\circ p_j\circ i_*$, where 
$p_j:(\pi_1(\Sigma))^n\to \pi_1(\Sigma)$ is the projection on the $j$-th factor.
It is a well-known fact that the kernel of the homomorphism $i_*$ equals to 
the normal closure $H_n$ of $P_n(\Delta)$ in the group $P_n(\Sigma)$ \cite[Theorem 1]{MR334182}, see also \cite{MR234447}.
Thus every such $\varphi$ vanishes on $H_n$. 

Let us consider the case when $\Sigma=T$ is a torus. 
In this case the group $H_n$ equals to the commutator subgroup $[P_n(T),P_n(T)]$, see \cite{MR234447}. 
Hence there are infinitely  many linearly independent quasimorphisms on $P_n(T)$, that vanish 
on $P_n(\Delta)$, which are obviously different from each such $\varphi$, since in this case 
$\varphi$ is a homomorphism. Let us discuss in more details the case $n=2$. 
The commutator subgroup $[P_2(T),P_2(T)]\cong \B F_2=\langle a,b \rangle$ and 
$P_2(\Delta)\cong\B Z$ is generated by the commutator $[a,b]$. Let 
$\operatorname{Q}(\B F_2;\B Z/2\times \B Z/2)$ be the subspace of quasimorphisms 
on $\B F_2$ invariant under the action generated by inverting generators.
Then every homogeneous quasimorphism $\phi\in\operatorname{Q}(\B F_2;\B Z/2\times \B Z/2)$ vanishes on $[a,b]$
and hence on $P_2(\Delta)$. Moreover, each such $\phi$ extends to $B_2(T)$, 
see \cite[Proposition 2.8]{zbMATH06814502}, hence by (3) in Theorem \ref{T:main} $\Psi$ is non-trivial. 
In particular, this gives an elementary construction of 
unbounded quasimorphisms on $\OP{Homeo}_0(T,\mu)$. 
Note that in this case, one needs to take $n>1$, since for $n=1$ we have $B_1(T) = Z(B_1(T))\cong\B Z^2$.

Let us discuss the remaining case when $\Sigma$ is a closed hyperbolic surface.
We would like to point out that there are infinitely many linearly independent quasimorphisms on $P_n(\Sigma)$, that vanish 
on $P_n(\Delta)$, which are different from each $\varphi$ described above.
Note that, there are infinitely many linearly independent quasimorphisms on 
$P_n(\Sigma)$ that vanish on $P_n(\Delta)$, but do not vanish on $H_n$. 
Indeed, by  \cite[Theorem 12]{zbMATH01928904} it is enough to show that the group $H_n$ 
is a non-reducible subgroup of $\operatorname{MCG}_n$. This can be shown as follows:
Let $C$ be a multicurve (a simplex in the curve complex). If it intersects non-trivially 
$\Delta\setminus z$, there is an element in $P_n(\Delta)$ which does not preserve $C$.
Otherwise, if $C$ lies outside of $\Delta$, then since $P_n(\Sigma)$ is non-reducible, there exists 
$\alpha\in P_n(\Sigma)$ so that $\alpha(C)\neq C$. Note that one can choose $\alpha$ such that $\alpha(C)$ intersects
$\Delta\setminus z$ non-trivially. Hence there exists $\beta\in P_n(\Delta)$ such that $\beta\alpha(C)\neq\alpha(C)$. It follows 
that $\alpha^{-1}\beta\alpha\in H_n$ and $\alpha^{-1}\beta\alpha(C)\neq C$. 
Thus $H_n$ is a non-reducible subgroup of $\operatorname{MCG}_n$.
\end{remark}

\begin{remark}
Semi-boundedness of $2$-cocycles was introduced by Gal-Kędra
\cite{zbMATH06065373} where it was used to prove undistortedness of symplectic
diffeomorphisms for certain symplectic manifolds. It was further generalized to
the concept of $p$-boundedness in \cite{arXiv:2208.03168}, which also
introduced $p$-bounded cohomology of groups. Functions with semi-bounded differentials were recently used in \cite{BS-equators} to study the $L^p$-geometry of the space of contractible loops on surfaces.
\end{remark}

\subsection*{\texorpdfstring{$C^0$}{C0}-continuity.}
Let $\mu$ be the measure associated with a volume form on $M$.
Given a homogeneous quasimorphism 
$\varphi \colon P_n(M)\to \B R$ vanishing
on the centre of $P_n(M)$, and in case $M$ is a surface vanishing on $P_n(\Delta)$, 
the formula \eqref{Eq:GG} well defines a
quasimorphism $\Psi\colon {\rm Diff}_0(M,{\mu})\to \B R$.

\begin{corollary}\label{C:c0-extension}
The homogenization $\overline{\Psi}\colon {\rm Diff}_0(M,{\rm \mu}) \to \B R$
has a unique $C^0$-continuous extension to 
${\rm Homeo}_0(M,\mu)$ which is a homogeneous quasimorphism.
\end{corollary}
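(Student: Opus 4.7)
The corollary essentially repackages Theorem \ref{T:main}(2) once we add a density argument, so the plan is short.

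\textbf{Existence.} The idea is to take the extension to simply be $\Psi$ itself, restricted from $\mathrm{Homeo}_0(M)$ to $\mathrm{Homeo}_0(M,\mu)$. Indeed, Theorem \ref{T:main}(1) guarantees that the formula \eqref{Eq:GG} makes sense on all of $\mathrm{Homeo}_0(M)$, and Theorem \ref{T:main}(2) tells us that its restriction to $\mathrm{Homeo}_0(M,\mu)$ is a quasimorphism whose homogenization is $C^0$-continuous. I would denote this homogenization by $\widetilde{\Psi}$ and observe that, since the integral formula \eqref{Eq:GG} on $\mathrm{Diff}_0(M,\mu)$ is precisely the original Gambaudo–Ghys/Polterovich quasimorphism, we have $\widetilde{\Psi}|_{\mathrm{Diff}_0(M,\mu)} = \overline{\Psi}$. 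Thus $\widetilde{\Psi}$ is a $C^0$-continuous homogeneous quasimorphism on $\mathrm{Homeo}_0(M,\mu)$ extending $\overline{\Psi}$.

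\textbf{Uniqueness.} Suppose $\Phi_1,\Phi_2\colon \mathrm{Homeo}_0(M,\mu)\to \B R$ are two $C^0$-continuous homogeneous quasimorphisms with $\Phi_1|_{\mathrm{Diff}_0(M,\mu)} = \Phi_2|_{\mathrm{Diff}_0(M,\mu)}$. The difference $\Phi_1-\Phi_2$ is $C^0$-continuous and vanishes on $\mathrm{Diff}_0(M,\mu)$. Invoking the classical fact that $\mathrm{Diff}_0(M,\mu)$ is $C^0$-dense in $\mathrm{Homeo}_0(M,\mu)$ (going back to Oxtoby–Ulam, and available on general smooth compact manifolds), we conclude $\Phi_1 = \Phi_2$.

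\textbf{Main obstacle.} The technical content has already been absorbed into Theorem \ref{T:main}; what remains is essentially a bookkeeping statement plus an appeal to $C^0$-density of volume-preserving diffeomorphisms inside volume-preserving homeomorphisms. The one point that deserves care is invoking the correct form of the Oxtoby–Ulam-type approximation theorem in the stated generality (compact oriented $M$, possibly with boundary, with the identity near $\partial M$), but this is standard and does not require any new input beyond Theorem \ref{T:main}.
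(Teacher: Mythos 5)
Your proof is correct and matches what the paper intends: Corollary~\ref{C:c0-extension} is stated without a proof block precisely because it follows by unwinding Theorem~\ref{T:main}(2) (existence of the $C^0$-continuous homogenization on ${\rm Homeo}_0(M,\mu)$, which restricts to $\overline{\Psi}$ on ${\rm Diff}_0(M,\mu)$ since homogenization is pointwise) together with $C^0$-density of ${\rm Diff}_0(M,\mu)$ in ${\rm Homeo}_0(M,\mu)$ for uniqueness. One small slip: well-definedness of $\Psi$ on ${\rm Homeo}_0(M)$ is the main assertion of Theorem~\ref{T:main}, not item (1), which concerns semi-boundedness of $\delta\Psi$.
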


\subsection*{Entropy.}
Let $\Sigma$ be a closed orientable surface of positive genus. For certain 
quasimorphisms $\varphi\colon B_n(\Sigma)\to \B R$ the restriction of
$\Psi$ to ${\rm Diff}_0(\Sigma,\mu)$ is bounded on the set ${\rm Ent}_0$ 
of diffeomorphisms of zero entropy \cite{zbMATH07159382}.
Moreover, these quasimorphisms come from Bestvina-Fujiwara construction and thus satisfy
conditions of Theorem \ref{T:main}, see \cite{zbMATH07159382}. 

It follows from Corollary \ref{C:c0-extension}
that the extension of the homogenization of $\Psi$,
$$\widetilde{\Psi}\colon {\rm Homeo}_0(\Sigma,\mu)\to \B R$$  
vanishes on the $C^0$-closure of ${\rm Ent}_0$ in ${\rm Homeo}_0(\Sigma,\mu)$
and on all its conjugates. 

Consider the set
$$
S = \bigcup_{f\in {\rm Homeo}_0(\Sigma,\mu)} f^{-1}\cdot \overline{\rm Ent}_0^{\rm Homeo}\cdot f,
$$
which is the normal closure of the $C^0$-closure in ${\rm Homeo}_0(\Sigma,\mu)$ of
the set of zero entropy measure preserving diffeomorphisms. Let $G_S \leq {\rm Homeo}_0(\Sigma, \mu)$ be the subgroup generated by $S.$ Note that $G_S$ is a large subgroup. For example, by simplicity of the kernel of the Flux homomorphism on  ${\rm Diff}_0(\Sigma,\mu)$ \cite{zbMATH01002916}, it is easily seen that ${\rm Diff}_0(\Sigma,\mu) \leq G_S$, see e.g. \cite{zbMATH07159382}. 
On the other hand, by \cite[Theorem 1.10]{CGHMSS}, 
the commutator group $[{\rm Homeo}_0(\Sigma, \mu), {\rm Homeo}_0(\Sigma, \mu)]$ is simple. This implies that 
$$[{\rm Homeo}_0(\Sigma, \mu), {\rm Homeo}_0(\Sigma, \mu)] \leq G_S.$$ 
Since $\widetilde{\Psi}$ does not vanish on ${\rm Diff}(\Sigma,\mu)$,
we have the following consequence of Corollary~\ref{C:c0-extension}.

\begin{corollary}\label{C:ent-metric}
The diameter of the bi-invariant metric on $G_S$ associated with the generating set $S$ is infinite.\qed
\end{corollary}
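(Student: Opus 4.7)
The plan is to exhibit a single homogeneous quasimorphism on $\OP{Homeo}_0(\Sigma,\mu)$ which vanishes identically on $S$ yet is unbounded on $G_S$; the defect of such a quasimorphism will then supply the desired linear lower bound on $S$-word length.

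First I would pick $\varphi\colon P_n(\Sigma)\to\B R$ as described in the entropy discussion preceding Corollary~\ref{C:ent-metric}: a non-trivial homogeneous quasimorphism satisfying the hypotheses of Theorem~\ref{T:main} and such that the Gambaudo-Ghys map $\Psi\colon\OP{Diff}_0(\Sigma,\mu)\to\B R$ it induces is bounded on the set $\OP{Ent}_0$ of zero-entropy diffeomorphisms. The excerpt already records, as a consequence of Corollary~\ref{C:c0-extension}, that the $C^0$-continuous extension $\widetilde{\Psi}\colon\OP{Homeo}_0(\Sigma,\mu)\to\B R$ of the homogenization $\overline{\Psi}$ vanishes on $\overline{\rm Ent}_0^{\rm Diff}$ and on every one of its conjugates; equivalently, $\widetilde{\Psi}$ is identically zero on the generating set $S$.

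Second, I would carry out the familiar word-length estimate. Letting $D$ denote the defect of $\widetilde{\Psi}$, any factorization $g=s_1\cdots s_k$ with $s_i\in S$ yields $|\widetilde{\Psi}(g)|\leq (k-1)D$ by iterating the quasimorphism inequality together with $\widetilde{\Psi}(s_i)=0$, hence
\[
\|g\|_S \;\geq\; \frac{|\widetilde{\Psi}(g)|}{D}.
\]
To conclude it suffices to find a single $g_0\in G_S$ with $\widetilde{\Psi}(g_0)\neq 0$: homogeneity $\widetilde{\Psi}(g_0^n)=n\,\widetilde{\Psi}(g_0)$ then forces $\|g_0^n\|_S\to\infty$, proving that $G_S$ has infinite diameter with respect to the bi-invariant $S$-word metric.

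Such a $g_0$ is easy to produce. The entropy-vanishing quasimorphisms supplied by \cite{zbMATH07159382} are not homomorphisms, so $\overline{\Psi}$ has strictly positive defect and hence cannot vanish on the whole commutator subgroup of $\OP{Diff}_0(\Sigma,\mu)$; in particular $\widetilde{\Psi}$ is non-zero on some commutator $[a,b]$, and the inclusion $[\OP{Homeo}_0(\Sigma,\mu),\OP{Homeo}_0(\Sigma,\mu)]\leq G_S$ noted in the excerpt places $g_0=[a,b]$ inside $G_S$. The only non-routine ingredient in the whole argument is the existence of $\varphi$ with the required entropy-vanishing property, imported wholesale from \cite{zbMATH07159382}; everything else follows formally from Corollary~\ref{C:c0-extension}, the conjugation invariance of homogeneous quasimorphisms, and the defect-based word-length inequality.
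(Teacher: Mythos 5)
Your overall strategy is the right one and matches the paper's intended ``immediate consequence'': $\widetilde{\Psi}$ is a homogeneous quasimorphism vanishing on $S$, so the bi-invariant $S$-word length satisfies $\|g\|_S \geq |\widetilde{\Psi}(g)|/D$, and one only needs a single $g_0 \in G_S$ with $\widetilde{\Psi}(g_0) \neq 0$ to conclude via homogeneity.

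Where you take an unnecessary detour is the last step. You produce $g_0$ by asserting that the entropy-vanishing $\overline{\Psi}$ from \cite{zbMATH07159382} is not a homomorphism (hence has positive defect, hence cannot vanish on the commutator subgroup of ${\rm Diff}_0(\Sigma,\mu)$, which sits inside $G_S$). This argument is logically sound once one grants the ``not a homomorphism'' assertion, but that assertion is an extra fact requiring its own justification, and you supply none beyond the reference. The paper hands you something simpler and already stated on the page before the corollary: it explicitly records ${\rm Diff}_0(\Sigma,\mu) \leq G_S$ (via simplicity of the Hamiltonian kernel), and Theorem~\ref{T:main}(3) guarantees $\overline{\Psi}$ is non-trivial, i.e.\ nonzero somewhere on ${\rm Diff}_0(\Sigma,\mu)$. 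Taking $g_0$ to be any such element avoids the non-homomorphism claim entirely. So: correct structure, correct defect estimate, correct use of Corollary~\ref{C:c0-extension} and conjugation invariance; only the final existence argument for $g_0$ is more roundabout and more fragile than the one the paper is pointing at.
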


This corollary says that the set $S$ is {\it small}, in the sense that for any
positive integer $m\in \B N$ there are elements of ${\rm Homeo}_0(\Sigma,\mu)$ which
cannot be presented as product of up to $m$ elements from $S$. It follows from
\cite{zbMATH03676775} that $S$ is contained in a closed subset with empty interior, so it
is topologically small. The above corollary says that it is also algebraically
small. Since entropy is not $C^0$-continuous, the relation between $S$ and
the set of measure preserving homeomorphisms of zero entropy is unclear.

It is also unclear how large the group $G_S$ really is. Indeed, it contains all the elements which are currently known not to lie in $[{\rm Homeo}_0(\Sigma, \mu), {\rm Homeo}_0(\Sigma, \mu)]$. 
However, the abelianization ${\rm Homeo}_0(\Sigma, \mu)/[{\rm Homeo}_0(\Sigma, \mu), {\rm Homeo}_0(\Sigma, \mu)]$ is not yet completely determined and it is unclear what subgroup $G_S$ defines in it.

\begin{question}
Is it true that $G_S = {\rm Homeo}_0(\Sigma, \mu)$?   
\end{question}

\subsection*{Distortion.} 
Recall that an element $g$ of a finitely generated group
$\Gamma$ is undistorted if 
$$\lim_{k\to \infty}\frac{|g^k|}{k}>0,$$
where $|g|$ denotes the word norm with respect to any finite generating set of $\Gamma$.
An element $g$ of an arbitrary group $G$ is undistorted if it is undistorted in every
finitely generated subgroup $\Gamma\leq G$ containing it. Understanding distortion
in groups of homeomorphisms is motivated by the topological version of the Zimmer
conjecture. 

Existence of functions on a group $G$ whose differential
is semi-bounded can be used to prove undistortedness of elements of $G$.
The following observation almost immediately follows from
\cite[Proposition 4.2]{arXiv:2208.03168}.

\begin{theorem}\label{T:distortion}
Let $\psi \colon G\to \B R$ be a function such that $\delta\psi$ is semi-bounded.
If 
$$
\limsup_{k\to \infty}\frac{|\psi(f^k)|}{k}>0
$$ 
then $f$ is undistorted.
\end{theorem}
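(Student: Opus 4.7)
The plan is to show that $\psi$ grows at most linearly with the word length in any finitely generated subgroup $\Gamma \le G$ containing $f$; combining this with the hypothesis then forces $|f^k|$ to grow linearly in $k$.

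First I would fix a finitely generated subgroup $\Gamma \le G$ with $f \in \Gamma$, and a finite symmetric generating set $S = \{s_1^{\pm 1}, \ldots, s_r^{\pm 1}\}$. Because $\delta\psi$ is semi-bounded, each quantity $D(s_i^{\pm 1}) = \sup_h |\delta\psi(s_i^{\pm 1}, h)|$ is finite, so the constant
$$M := \max_{s \in S} \bigl(|\psi(s)| + D(s)\bigr)$$
is finite as well. This is the only place where semi-boundedness is used, and it is exactly what allows the error terms produced by expanding $\psi$ on products to be controlled by a constant independent of which suffix they are paired with.

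Next I would prove by induction on $n = |g|_S$ that every $g \in \Gamma$ satisfies $|\psi(g)| \le 2Mn$. Writing $g = s_{i_1}^{\epsilon_1} \cdots s_{i_n}^{\epsilon_n}$ with $n$ minimal and setting $h = s_{i_2}^{\epsilon_2} \cdots s_{i_n}^{\epsilon_n}$, the cocycle identity gives
$$\psi(g) = \psi(s_{i_1}^{\epsilon_1}) + \psi(h) - \delta\psi(s_{i_1}^{\epsilon_1}, h),$$
and semi-boundedness bounds the last term by $D(s_{i_1}^{\epsilon_1}) \le M$ regardless of $h$. The inductive hypothesis then yields $|\psi(g)| \le 2M + 2M(n-1) = 2Mn$, exactly as claimed. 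This is the step where Proposition 4.2 of \cite{arXiv:2208.03168} is essentially being reproduced.

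Finally I would apply this to $g = f^k$ and divide by $k$, obtaining $|\psi(f^k)|/k \le 2M \, |f^k|_S/k$ for every $k$. Taking $\limsup$ and using the hypothesis gives
$$\limsup_{k\to\infty}\frac{|f^k|_S}{k} \ge \frac{1}{2M}\limsup_{k\to\infty}\frac{|\psi(f^k)|}{k} > 0,$$
and since $k \mapsto |f^k|_S$ is subadditive, Fekete's lemma guarantees that $\lim_k |f^k|_S/k$ exists and coincides with the $\limsup$. This shows $f$ is undistorted in $\Gamma$; since $\Gamma$ was arbitrary, $f$ is undistorted in $G$. There is no real obstacle here — the only subtlety is checking that semi-boundedness (rather than full boundedness of $\delta\psi$) suffices, which it does because the inductive expansion peels off one generator at a time and the bound $D(s)$ is applied only at generators.
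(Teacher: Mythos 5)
Your proof is correct and relies on the same core mechanism as the paper's: semi-boundedness lets you peel off one generator at a time with an error controlled uniformly in the remaining suffix, yielding the linear bound $|\psi(g)|\lesssim |g|_S$. The paper packages this slightly differently, via Lemma~\ref{L:psi-norm}: it defines the pseudo-norm $|f|_\psi=\sup_g|\psi(g)-\psi(fg)|$, shows $\psi$ is $1$-Lipschitz for the associated pseudo-metric, and then uses subadditivity of $|\cdot|_\psi$ together with $|s|_\psi<\infty$ for each generator $s$ to get $|f^k|_\psi\le C|f^k|_S$; your direct induction fuses these two steps into one estimate. The only cosmetic point to tidy is the base case of the induction: your bound $|\psi(g)|\le 2M|g|_S$ requires $\psi(e)=0$, which is harmless to assume since replacing $\psi$ by $\psi-\psi(e)$ changes neither $\delta\psi$ nor the $\limsup$ in the hypothesis.
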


\begin{proof}
It follows from Lemma \ref{L:psi-norm} that $|f|_{\psi}=\sup_g|\psi(g)-\psi(fg)|$ 
is a pseudo-norm with respect to which
$\psi$ is Lipschitz which implies that 
\begin{equation}
\limsup_{k\to \infty} \frac{|f^k|_{\psi}}{k}>0.
\label{Eq:limsup}
\end{equation}
Let
$\Gamma\subseteq G$ be a finitely generated subgroup containing $f$
and let $C=\max\{|s|_{\psi}\ |\ s\in S\}$, 
where $S\subseteq \Gamma$ is a finite generating set. Then
$$
|f|_{\psi} = |s_1\dots s_m|_{\psi} \leq |s_1|_{\psi} + \dots + |s_m|_{\psi}
\leq C |f|_S
$$
which, together with \eqref{Eq:limsup} implies that $f$ is undistorted in $\Gamma$
and hence in $G$.
\end{proof}

\begin{corollary}\label{C:distortion}
Let $f\in {\rm Homeo}_0(M)$ and let $\Psi$ be defined by \eqref{Eq:GG}. If
$$
\limsup_{k\to \infty}\frac{|\Psi(f^k)|}{k}>0
$$
then $f$ is undistorted in ${\rm Homeo}_0(M)$.\qed
\end{corollary}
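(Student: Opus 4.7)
The plan is to deduce Corollary~\ref{C:distortion} as an immediate instance of Theorem~\ref{T:distortion} applied to the function $\psi = \Psi \colon {\rm Homeo}_0(M) \to \B R$ with $G = {\rm Homeo}_0(M)$. To invoke that theorem one needs exactly two pieces of data: that $\delta\Psi$ is semi-bounded, and that $\limsup_{k\to\infty}|\Psi(f^k)|/k > 0$. The second is the hypothesis of the corollary itself, so nothing is required there.

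The first ingredient is already established by part~(1) of Theorem~\ref{T:main}, which asserts precisely that
$$
D(f) = \sup_g|\delta\Psi(f,g)| = \sup_g|\Psi(g) - \Psi(fg) + \Psi(f)| < \infty
$$
for every $f \in {\rm Homeo}_0(M)$; this is the definition of semi-boundedness of the $2$-cocycle $\delta\Psi$. Note that here we crucially use that the formula \eqref{Eq:GG} is well defined on the full group ${\rm Homeo}_0(M)$ (not merely on the measure-preserving subgroup), which is also part of the content of Theorem~\ref{T:main}; without this one could not even form $\delta\Psi$ on ${\rm Homeo}_0(M)$.

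With both hypotheses in place, Theorem~\ref{T:distortion} applies directly and yields undistortedness of $f$ in ${\rm Homeo}_0(M)$, which is the conclusion sought. There is no real obstacle: the corollary is a one-line specialization of Theorem~\ref{T:distortion}, and all of the substantive work is already packaged into the semi-boundedness statement of Theorem~\ref{T:main}(1) and the general pseudo-norm argument that underlies the proof of Theorem~\ref{T:distortion}. The only thing worth flagging is that the hypothesis of Corollary~\ref{C:distortion} is phrased in terms of $\Psi(f^k)$ for $f \in {\rm Homeo}_0(M)$, so one should record that this makes sense (again thanks to well-definedness of $\Psi$ on ${\rm Homeo}_0(M)$), after which the proof is literally a citation.
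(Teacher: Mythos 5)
Your proof is correct and matches the paper's intent exactly: the corollary is stated with a terminal \qed because it is an immediate specialization of Theorem~\ref{T:distortion} to $\psi = \Psi$, with semi-boundedness of $\delta\Psi$ supplied by Theorem~\ref{T:main}(1). Nothing further is needed.
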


Verifying the hypothesis of the above corollary may not be easy in a concrete case.
However, we tackle this problem by redefining $\Psi$ in Theorem \ref{T:main} as follows. Let point
$z\in {\rm C}_n(M)$ be a base-point and let $\Psi_z\colon {\rm Homeo}_0(M)\to \B R$
be given by
$$
\Psi_z(f) = \varphi(\gamma(f,z)),
$$
where $\varphi\colon P_n(M)\to \B R$ is a homogeneous quasimorphism vanishing on $P_n(\Delta)$
and on the centre of $P_n(M)$. We show in Section \ref{S:distortion} that
the cocycle $\delta\Psi_z$ is semi-bounded (Proposition \ref{P:semi-bounded-z}).

\begin{corollary}\label{C:distortion-fix}
Let $f\in {\rm Homeo}_0(M)$. If $z\in M$ is a fixed point of $f$ such that
$\varphi(\gamma(f,z))>0$ then $f$ is undistorted in ${\rm Homeo}_0(M)$.
\end{corollary}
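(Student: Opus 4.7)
The plan is to apply Theorem \ref{T:distortion} with $\psi:=\Psi_z$. Two ingredients are needed: first, that $\delta\Psi_z$ is semi-bounded, which is the content of Proposition \ref{P:semi-bounded-z} and may be invoked directly; second, that
$$
\limsup_{k\to\infty}\frac{|\Psi_z(f^k)|}{k}>0.
$$
All the work goes into verifying the second condition, and the key observation is that fixing $z$ makes $k\mapsto \gamma(f^k,z)$ genuinely multiplicative in $P_n(M,z)$.

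The standard cocycle identity satisfied by $\gamma$, obtained by concatenating an isotopy from the identity to $g$ with the conjugated isotopy from $g$ to $fg$, is
$$
\gamma(fg,x) \;=\; \gamma(f,g(x))\cdot \gamma(g,x) \quad \text{in } P_n(M,z).
$$
I would first write this identity down carefully from the definition in Section \ref{S:preliminaries}, checking that the auxiliary paths $\ell_{\cdot,\cdot}$ cancel as advertised and that the homotopy class is independent of the isotopy chosen (which is exactly where the vanishing on the centre $Z(P_n(M))$ is used). Specialising to $g=f^{k-1}$ and using that $z$ is fixed by $f$, hence by every power, gives
$$
\gamma(f^k,z) \;=\; \gamma(f,f^{k-1}(z))\cdot \gamma(f^{k-1},z) \;=\; \gamma(f,z)\cdot \gamma(f^{k-1},z),
$$
so by induction $\gamma(f^k,z)=\gamma(f,z)^k$ in $P_n(M,z)$.

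Homogeneity of $\varphi$ now gives
$$
\Psi_z(f^k) \;=\; \varphi\bigl(\gamma(f,z)^k\bigr) \;=\; k\,\varphi(\gamma(f,z)),
$$
and the hypothesis $\varphi(\gamma(f,z))>0$ yields $|\Psi_z(f^k)|/k = \varphi(\gamma(f,z))>0$ for every $k$, so the required limsup is strictly positive. Applying Theorem \ref{T:distortion} to $\Psi_z$ concludes that $f$ is undistorted in ${\rm Homeo}_0(M)$.

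The only delicate step is the derivation of the cocycle identity, since the definition of $\gamma$ depends on a choice of isotopy and on the fixed auxiliary paths $\ell_{x,y}$; I expect this to be routine but it must be done at the level of homotopy classes in $C_n(M)$, where the vanishing of $\varphi$ on the centre removes any ambiguity coming from different choices of isotopy connecting the identity to $f$. Everything else is formal: the cocycle property is purely algebraic once set up, and homogeneity of $\varphi$ converts the resulting power into a linear growth rate in $k$.
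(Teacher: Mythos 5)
Your proof is correct and follows the same route as the paper: use that $z$ is fixed to obtain $\gamma(f^k,z)=\gamma(f,z)^k$ from the cocycle identity (Lemma \ref{L:}), then homogeneity of $\varphi$ gives $\Psi_z(f^k)=k\,\varphi(\gamma(f,z))$, and conclude by Proposition \ref{P:semi-bounded-z} and Theorem \ref{T:distortion}. The extra care you take with the cocycle identity and the role of the centre is sound but not a different argument, just a more detailed version of what the paper states tersely.
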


\begin{proof}
Since $z$ is a fixed point of $f$, we get that $\gamma(f^k,z) = \gamma(f,z)^k$.
It follows that 
$$\Psi_z(f^k)=\varphi(\gamma(f^k,z)) = k\Psi_z(f)>0$$ 
and the statement follows from Theorem \ref{T:distortion}.
\end{proof}

\begin{example}\label{E:hyperbolic}
Let $f$ be a homeomorphism of a closed oriented hyperbolic manifold. 
Suppose $z\in M$ is a fixed point of $f$ such that 
$$\gamma(f,z)\in \pi_1(M,z)$$
is non-trivial. Then $f$ is undistorted. Indeed, in this case there
exists a homogeneous quasimorphism non-vanishing on $\gamma(f,z)$ (see e.g. \cite{zbMATH06532523}) and
the statement follows from Corollary \ref{C:distortion}.
\hfill $\diamondsuit$
\end{example}

\subsection*{1-bounded cohomology.}
Since $\Psi$ is semi-bounded it is natural to ask whether it
represents a non-trivial class in $1$-bounded cohomology.
The next corollary states that this is indeed the case.

\begin{corollary}\label{C:h2b}
Let $M$ be a closed orientable surface of positive genus, or a higher dimensional manifold whose fundamental group modulo its centre admits infinitely many linearly independent homogeneous quasimorphisms. Then
$$
\dim H^2_{(1)}({\rm Homeo}_0(M),\B R) = \infty.
$$
\end{corollary}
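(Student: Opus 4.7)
The plan is to produce, for each admissible homogeneous quasimorphism $\varphi$ as in Theorem \ref{T:main}, a class $[\delta\Psi_\varphi] \in H^2_{(1)}({\rm Homeo}_0(M),\B R)$, and then to establish linear independence of these classes by restricting to ${\rm Homeo}_0(M,\mu)$ and invoking the classical linear independence of the Gambaudo--Ghys / Polterovich quasimorphisms on ${\rm Diff}_0(M,\mu)$.

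First I would fix an infinite linearly independent family of admissible homogeneous quasimorphisms $\varphi_i$. When $M$ is a closed surface of positive genus, the remark following Theorem \ref{T:main} already exhibits such a family on $P_n(M)$, vanishing on $P_n(\Delta)$ and on the centre of $P_n(M)$, and extending to unbounded quasimorphisms on $B_n(M)$. When $\dim M \geq 3$ and $\pi_1(M)/Z(\pi_1(M))$ admits a non-trivial homogeneous quasimorphism, Bestvina--Fujiwara produces an infinite-dimensional space of them; taking $n=1$, these pull back to $P_1(M) = \pi_1(M)$ and automatically vanish on the centre. Applying Theorem \ref{T:main}(1) to each $\varphi_i$, one obtains $\Psi_i := \Psi_{\varphi_i} \colon {\rm Homeo}_0(M) \to \B R$ with $\delta \Psi_i$ semi-bounded, hence a class $[\delta \Psi_i] \in H^2_{(1)}({\rm Homeo}_0(M), \B R)$.

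To prove linear independence, suppose $\sum_{i=1}^N c_i [\delta \Psi_i] = 0$ in $H^2_{(1)}$. Since a 1-cochain is 1-bounded precisely when it is bounded, this identity means $\sum c_i \Psi_i = h + b$ on ${\rm Homeo}_0(M)$ for some homomorphism $h$ and some bounded function $b$. Restricting to ${\rm Homeo}_0(M,\mu)$, where Theorem \ref{T:main}(2) guarantees that each $\Psi_i$ is a quasimorphism, and passing to homogenizations --- which annihilate $b$ and fix $h$ --- yields
$$
\sum_{i=1}^N c_i \widetilde{\Psi_i} = h|_{{\rm Homeo}_0(M,\mu)},
$$
where $\widetilde{\Psi_i}$ are the $C^0$-continuous homogeneous quasimorphisms furnished by Corollary \ref{C:c0-extension}. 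Restricting once more to the subgroup ${\rm Diff}_0(M,\mu)$, the $\widetilde{\Psi_i}$ coincide with the classical Gambaudo--Ghys / Polterovich quasimorphisms $\overline{\Psi_i}$.

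The main obstacle is the concluding step: deducing that $c_i = 0$ for all $i$. This reduces to the linear independence modulo homomorphisms of the Gambaudo--Ghys / Polterovich quasimorphisms on ${\rm Diff}_0(M,\mu)$ associated to a linearly independent family of $\varphi_i$. This is a standard fact, proved by evaluating on diffeomorphisms supported in disjoint small embedded balls whose braiding realises prescribed elements of $P_n(M)$ (respectively, loops representing prescribed conjugacy classes in $\pi_1(M)$ when $\dim M \geq 3$), thereby isolating the contribution of each $\varphi_i$ and reducing to the assumed linear independence on the (pure) braid / fundamental group. This completes the argument.
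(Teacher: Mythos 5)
Your overall strategy is the right one and, since the paper offers no explicit proof of this corollary, it is almost certainly the intended argument: take an infinite linearly independent family of admissible $\varphi_i$, apply Theorem~\ref{T:main}(1) to obtain classes $[\delta\Psi_{\varphi_i}]\in H^2_{(1)}$, and test linear independence by restricting to the measure-preserving subgroup. Two points deserve attention, however.

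First, in the higher-dimensional case your invocation of Bestvina--Fujiwara is not justified: the hypothesis of Corollary~\ref{C:h2b} is only that $\pi_1(M)/Z(\pi_1(M))$ carries one non-trivial homogeneous quasimorphism, whereas Bestvina--Fujiwara requires an action on a hyperbolic space with WPD/weakly contracting elements to produce infinitely many. If one only has a single admissible $\varphi$, some additional argument is needed to get infinite dimension of the space of quasimorphisms; as written, this step is a gap.

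Second, the concluding ``disjoint small embedded balls'' sketch does not directly apply: for surfaces the admissible $\varphi_i$ vanish on $P_n(\Delta)$, and homeomorphisms supported in a small disk produce (essentially) braids in $P_n(\Delta)$, which $\varphi_i$ kill; so naive localization does not isolate the $\varphi_i$. The argument internal to the paper is cleaner and avoids this: the assignment $\varphi\mapsto\Psi_\varphi$ is linear, so $\sum c_i\Psi_i=\Psi_{\sum c_i\varphi_i}$. If $\sum c_i[\delta\Psi_i]=0$ in $H^2_{(1)}$ then $\Psi_{\sum c_i\varphi_i}$ is a homomorphism plus a bounded function; since ${\rm Homeo}_0(M)$ is simple (for closed $M$), the homomorphism part is zero, so $\Psi_{\sum c_i\varphi_i}$ is bounded and its homogenization on ${\rm Diff}_0(M,\mu)$ vanishes. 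Now choose the $\varphi_i$ as the restrictions to $P_n(\Sigma)$ of a linearly independent family of homogeneous quasimorphisms on $B_n(\Sigma)$ vanishing on $P_n(\Delta)$ (which exists by the remark following Theorem~\ref{T:main}). Then any nonzero combination $\sum c_i\varphi_i$ extends to a nonzero homogeneous, hence unbounded, quasimorphism on $B_n(\Sigma)$, and Theorem~\ref{T:main}(3) says $\overline{\Psi}_{\sum c_i\varphi_i}$ is non-trivial --- a contradiction, giving $c_i=0$. This replaces your ``standard fact'' appeal with the paper's own Theorem~\ref{T:main}(3), which is the device actually at hand.
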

\begin{proof}
The hypothesis implies that the construction \eqref{Eq:GG} yields infinitely many
linearly independent quasimorphisms on ${\rm Diff}_0(M,{\rm vol})$, according to
\cite{zbMATH06490154,zbMATH06814502}. In particular $H^2_b({\rm Diff}_0(M,{\rm vol});\B R)$ is infinite dimensional. Since the comparison map
$H^2_b(G;\B R)\to H^2_{(1)}(G;\B R)$ is injective for any group \cite{arXiv:2208.03168}, we obtain that the image of homomorphism 
$H^2_{(1)}({\rm Homeo}_0(M),\B R)\to H^2_{(1)}({\rm Diff}_0(M,{\rm vol});\B R)$ induced by the inclusion, contains an infinite dimensional part of the image of
the comparison map 
$H^2_b({\rm Diff}_0(M,{\rm vol};\B R)\to H^2_{(1)}({\rm Diff}_0(M,{\rm vol});\B R)$,
which proves the statement.
\end{proof}
\begin{remark}
The above statement for surfaces of positive genus also follows from the result of Bowden, Hensel and Webb
\cite[Theorem 1.4]{zbMATH07420183} who showed that $H^2_b({\rm Homeo}_0(\Sigma),\B R)$ is infinite dimensional.
Since the comparison homomorphism 
$H^*_b(G,\B R)\to H^*_{(1)}(G,\B R)$
is injective, their result implies the above. However, we conjecture that for surfaces our $1$-bounded
classes are not represented by bounded cocycles. In other words, they don't
come from quasimorphisms.
\end{remark}

\subsection*{Standard cohomology.}
The formula \eqref{Eq:GG} always defines a morphism between bounded
cochains 
$$C^*_b(P_n(\Sigma),\B R)\to C^*_b({\rm Homeo}_0(\Sigma,\mu),\B R),$$
which, according to \cite{zbMATH07503455}, descends to a homomorphism 
$$\C G\colon H^*_b(P_n(\Sigma),\B R)\to H^*_b({\rm Homeo}_0(\Sigma,\mu),\B R).$$ 
We thus have a commutative diagram
$$
\begin{tikzcd}
H^2_b(P_n(\Sigma);\B R)\ar[r,"\C G"] & H^2_b({\rm Homeo}_0(\Sigma,\mu),\B R)\ar[r,"c"] & H^2({\rm Homeo}_0(\Sigma,\mu),\B R)\\
Q(P_n(\Sigma))\ar[r, dashed]\ar[u,"\delta"] & Q({\rm Homeo}_0(\Sigma,\mu))\ar[ur,"0"] \ar[u,"\delta"] \\
\end{tikzcd}
$$
where $c$ is the comparison homomorphism,
the dashed arrow is not defined, and $Q$ denotes the space of quasimorphisms on a group in question. 
However, when homeomorphisms are replaced
by diffeomorphisms this arrow is well defined. In particular, for any quasimorphism
$\varphi$ the class 
$$c(\C G[\delta \varphi])=0$$
in $H^2({\rm Diff}_0(\Sigma,\mu))$. 
For homeomorphism we get classes $\C G[\delta\varphi]$ which are potentially
nontrivial in $H^2({\rm Homeo}_0(\Sigma,\mu),\B R)$.

\begin{question}
Does there exist $\varphi\colon P_n(\Sigma)\to \B R$ such that the
class $c(\C G[\delta\varphi])$ is non-trivial 
in the second cohomology group $H^2({\rm Homeo}_0(\Sigma,\mu),\B R)$?
\end{question}

\subsection*{Acknowledgments}
We would like to thank Juan Gonzalez-Meneses Lopez for helpful discussions.
All authors were partially supported by the OWRF fellowship in the MFO and the
RIG-fellowship of the ICMS. They wish to express their gratitude to the MFO and the ICMS for the support and excellent working conditions.

M.B. was partially supported by the Israeli Science Foundation grant 823/23.
E.S. was partially supported by an NSERC Discovery grant, the Courtois chair in fundamental research, an FRQNT teams grant, and a Sloan research fellowship.

\section{Preliminaries}\label{S:preliminaries}

\subsection*{Quasimorphisms.} Let $G$ be a group. A function $\varphi\colon G\to \B R$
is called a quasimorphism if
$$
D_{\varphi}:=\sup_{g,h}|\delta\varphi(g,h)| = \sup_{g,h}|\varphi(h) - \varphi(gh) + \varphi(g)| < \infty.
$$
In other words, $\delta\varphi$ is a bounded $2$-cocycle on $G$ with values in the
trivial module $\B R$. The constant $D_{\varphi}$ is called the defect of $\varphi$.
A function $\psi\colon G\to \B R$ is called homogeneous if 
$$
\psi(g^n)=n\psi(g)
$$
for every $g\in G$ and every $n\in \B Z$. A quasimorphism $\varphi$ can be homogenized 
by defining 
$$
\overline{\varphi}(g) = \lim_{n\to \infty}\frac{\varphi(g^n)}{n}.
$$
The above limit exists due to Fekete's Lemma and, moreover,
$\sup_g |\overline{\varphi}(g)-\varphi(g)|\leq D_{\varphi}$.
A homogeneous quasimorphism is constant on conjugacy classes. 
The differential $\delta\varphi$ of a quasimorphism represents a class of
the second bounded cohomology $H^2_b(G,\B R)$. If the homogenization of $\varphi$ 
is not a homomorphism then this class is non-trivial. 
Proofs of the above
facts are standard and can be found for example in \cite{zbMATH05577332}.\\

\subsection*{Semi-bounded cohomology.}
A $k$-cochain $c\colon G^k\to \B R$ is called $p$-bounded, where $p\leq k$, if for
fixed $g_1,\ldots,g_{k-p}\in G$ the function
$$
(g_{k-p+1},\ldots,g_k)\mapsto c(g_1,\ldots,g_k)
$$
is bounded. It follows that $p$-bounded cochains form subcomplexes and their
homology, denoted by $H^*_{(p)}(G,\B R)$, is called $p$-bounded (real)
cohomology of $G$. In particular, $k$-bounded cohomology is the bounded
cohomology $H^k_{(k)}(G,\B R)=H^k_b(G,\B R)$ and $0$-bounded $k$-th cohomology
is the ordinary one: $H^k_{(0)}(G,\B R) = H^k(G,\B R)$. Since $(p+1)$-bounded
cochains are $p$-bounded, there are comparison homomorphisms induced by
inclusion of complexes
$$
H^k_b(G,\B R)\to H^k_{(p+1)}(G,\B R)\to H^k_{(p)}(G,\B R)\to H^k(G,\B R).
$$
We refer to \cite{arXiv:2208.03168} for a general discussion and here we only discuss
$1$-bounded $2$-coboundaries. So, let $\psi\colon G\to \B R$ be a function
such that its coboundary is $1$-bounded:
$$
D(f):=\sup_{g}|\psi(g) - \psi(fg) + \psi(f)| <\infty.
$$

\begin{lemma}\label{L:psi-norm}
Let $\delta\psi$ be a $1$-bounded cocycle.
The formula
$$
|f|_{\psi} = \sup_g|\psi(g)-\psi(fg)|
$$
defines a pseudo-norm on $G$. Moreover, $\psi$ is $1$-Lipschitz with
respect to the pseudo-metric defined by $d_{\psi}(f,g) = |fg^{-1}|_{\psi}$.
\end{lemma}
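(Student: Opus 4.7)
The proof is a direct verification from the definitions, organized in four short steps.

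First, I would check finiteness of $|f|_\psi$. The $1$-boundedness hypothesis is precisely $|\delta\psi(f,g)| = |\psi(g) - \psi(fg) + \psi(f)| \leq D(f)$ uniformly in $g$. By the triangle inequality $|\psi(g) - \psi(fg)| \leq D(f) + |\psi(f)|$, and taking the supremum over $g$ gives $|f|_\psi \leq D(f) + |\psi(f)| < \infty$.

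Next, I would verify the three pseudo-norm axioms. Vanishing at the identity, $|e|_\psi = \sup_g|\psi(g)-\psi(g)| = 0$, is immediate. For symmetry, substituting $g \mapsto f^{-1}g$ in the defining supremum (a bijection of $G$) yields $|f|_\psi = \sup_g |\psi(f^{-1}g) - \psi(g)| = |f^{-1}|_\psi$. For subadditivity, insert $\pm\psi(hg)$ and split:
$$|\psi(g) - \psi(fhg)| \leq |\psi(g) - \psi(hg)| + |\psi(hg) - \psi(fhg)|;$$
the two terms are pointwise bounded by $|h|_\psi$ and $|f|_\psi$ respectively, and taking the supremum over $g$ gives $|fh|_\psi \leq |f|_\psi + |h|_\psi$.

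Finally, for the Lipschitz property, I would specialize the defining supremum to $g = b$ and $f = ab^{-1}$, obtaining $|\psi(a) - \psi(b)| \leq |ab^{-1}|_\psi$. This is the 1-Lipschitz statement with respect to the (right-invariant) metric associated to $|\cdot|_\psi$. There is no genuine obstacle in this lemma; the only point to be careful about is keeping the left/right conventions consistent across the definition, the symmetry substitution, and the Lipschitz specialization, so that each supremum is evaluated at the correct translate.
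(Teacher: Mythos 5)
Your verification is correct and follows the paper's proof essentially verbatim: symmetry via the substitution $g\mapsto f^{-1}g$, subadditivity by inserting $\pm\psi(hg)$ and splitting, and Lipschitzness by specializing the defining supremum, with the added (and harmless) explicit checks of finiteness and vanishing at the identity. One small point in your favour: you track the left/right convention correctly in the Lipschitz step and obtain $|\psi(a)-\psi(b)|\le|ab^{-1}|_\psi$, i.e.\ Lipschitz for the right-invariant pseudo-metric, whereas the paper writes $|\psi(f)-\psi(g)|\le|f^{-1}g|_\psi$, which does not quite follow from its own computation since the definition $|f|_\psi=\sup_g|\psi(g)-\psi(fg)|$ multiplies on the \emph{left}; the discrepancy is a convention slip and does not affect any downstream use of the lemma.
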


\begin{proof}
Both the symmetry and triangle inequality is straightforward:
$$
|f^{-1}|_{\psi} = \sup_g|\psi(g)-\psi(f^{-1}g)| = \sup_g |\psi(f^{-1}g)-\psi(f\cdot f^{-1}g)|=|f|_{\psi}.
$$
\begin{align*}
|fg|_{\psi} &= \sup_h|\psi(h)-\psi(fgh)|\\
&\leq \sup_h|\psi(h)-\psi(gh)|+\sup_h|\psi(gh)-\psi(fgh)|
=|f|_{\psi}+|g|_{\psi}.
\end{align*}
As well as the Lipschitz property:
$$
|\psi(f)-\psi(g)| = |\psi(g)-\psi(fg^{-1}\cdot g)| \leq |fg^{-1}|_{\psi} = d_{\psi}(f,g),
$$
and the proof follows.
\end{proof}

\subsection*{The main construction.}
Let $d$ be an auxiliary Riemannian metric on $M$. It induces the supremum metric $d_0$ on ${\rm Homeo}_0(M)$ by 
$$ 
d_0(f,g):=\sup_{x\in M}d(f(x),g(x)).  
$$
Since $M$ is a compact connected smooth manifold it has a CW-decomposition with one top
dimensional cell. Let $\Delta\subseteq M$ be the interior of that cell
which is diffeomorphic to a ball in $\B R^n$ and it is of full measure in $M$.  
Choose a fundamental domain in
the universal cover of $M$ so that the lift of the inclusion $\Delta\subseteq
M$ is contained in it. 
Let $z=(z_1,\ldots,z_n)\in {\rm C}_n(M)$ be the base-point, such that
$z_i\in \Delta$ for each $i=1,\ldots,n$. 
Given two points $x,y\in {\rm C}_n(\Delta)$,
let $\ell_{y,x}$ be a path from $x$ to $y$ consisting of $n$
unit speed geodesics from $x_i$ to $y_i$ with respect to the Euclidean
metric on $\Delta$. This Euclidean metric is
used to define geodesics $\ell_{y,x}$ and has nothing to do with the
auxiliary metric chosen in the beginning of this section.

In case $M$ is a surface and $n>1$, 
then the geodesics between $x_i$ and $y_i$ may collide, 
and then $\ell_{x,y}$ is not defined. However, this happens only for a measure zero set, thus does not affect the integral from the definition of $\Psi$. For more details see \cite[Section 3.2]{GambaudoPecou}.
By convention, we read concatenation of paths from right to left. That
is, $\ell_{z,y}*\ell_{y,x}$ is a path from $x$ to $y$ and then to $z$.

Let $f\in {\rm Homeo}_0(M)$ and let $x\in {\rm C}_n(\Delta)$ be a point such that 
$f(x)\in {\rm C}_n(\Delta)$. Let a braid $\gamma(f,x)$ in $P_n(M,z)$ be 
represented by
\begin{equation}
\gamma(f,x) = \left[ \ell_{z,f(x)}*\{f_t(x)\}*\ell_{x,z} \right],
\label{Eq:gfz1}
\end{equation}
where $\{f_t\}$ is an isotopy from the identity to $f$. 

The above braids are defined for points $x\in M$ from a set of full measure.

If $P_n(M)$ has no centre then the braid $\gamma(f,x)$ does not depend on the choice
of an isotopy $\{f_t\}_{t=0}^1$ (see, e.g., \cite[Section 2.2 (6),(7)]{BK-frag}). 
Otherwise, it does depend and that is why we assume that
the homogeneous quasimorphism $\varphi$ from Theorem \ref{T:main} vanishes on the centre
and we need the following observation.\\
\begin{lemma}\label{L:normal}
Let $\varphi\colon G\to \B R$ be a homogeneous quasimorphism vanishing
on a normal subgroup $H\leq G$. Then
$$
\varphi(gh) = \varphi(g)
$$
for all $g\in G$ and $h\in H$.
\end{lemma}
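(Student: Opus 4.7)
The plan is to exploit the combination of normality of $H$ and homogeneity of $\varphi$ to reduce the equality $\varphi(gh)=\varphi(g)$ to an asymptotic comparison of $\varphi$ on the $n$-th powers of $gh$ and $g$.

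First, I would show by induction on $n\geq 1$ that, for any fixed $g\in G$ and $h\in H$, one can write
$$
(gh)^n = g^n k_n
$$
for some $k_n\in H$. The base case $n=1$ takes $k_1=h$. For the inductive step, assuming $(gh)^n=g^nk_n$ with $k_n\in H$, one computes
$$
(gh)^{n+1} = g^n k_n \cdot gh = g^{n+1}\cdot (g^{-1}k_ng)\cdot h,
$$
and $k_{n+1}:=(g^{-1}k_ng)h$ lies in $H$ by normality.

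Next, I would apply the quasimorphism defect inequality to the factorization $(gh)^n=g^nk_n$:
$$
\bigl|\varphi((gh)^n)-\varphi(g^n)-\varphi(k_n)\bigr|\leq D_\varphi.
$$
Since $k_n\in H$ and $\varphi$ vanishes on $H$, the term $\varphi(k_n)$ drops out, yielding the bound $|\varphi((gh)^n)-\varphi(g^n)|\leq D_\varphi$ uniformly in $n$.

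Finally, dividing by $n$ and using homogeneity, $\varphi((gh)^n)=n\varphi(gh)$ and $\varphi(g^n)=n\varphi(g)$, so
$$
|\varphi(gh)-\varphi(g)|\leq \frac{D_\varphi}{n}
$$
for every $n\in\B N$, forcing equality. There is no real obstacle here; the only content is the normality-based factorization in the first step, after which the defect bound and homogeneity do the work mechanically.
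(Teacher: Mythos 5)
Your proof is correct and takes essentially the same approach as the paper: decompose $(gh)^n$ as $g^n$ times an element of $H$ (using normality), apply the defect inequality, invoke vanishing of $\varphi$ on $H$, and divide by $n$ using homogeneity. Your inductive formulation of the decomposition $(gh)^n = g^n k_n$ is slightly cleaner than the paper's explicit expression $(gh)^k = g^k h^k c_1 \cdots c_k$, but the argument is the same.
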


\begin{proof}
Observe that for any two elements $g,h\in G$ we have 
$(gh)^k = g^kh^kc_1\dots c_k$,
where $c_i$ are conjugates of commutators of $h$ \cite[Section 2.2.4]{zbMATH05577332}. 
It follows that
\begin{align*}
|\varphi(gh) - \varphi(g)|
&=\frac{1}{k}\left|\varphi\left( (gh)^k \right)-\varphi(g^k)\right|\\
&=\frac{1}{k}\left|\varphi\left(g^kh^kc_1\dots c_k \right)-\varphi(g^k)\right|\\
&\leq\frac{1}{k}\left( |\varphi(g^k)+\varphi(h^kc_1\dots c_k) -\varphi(g^k) | +D_{\varphi}\right)
=\frac{D_{\varphi}}{k}.
\end{align*}
The last equality follows since $H$ is normal and hence $h^kc_1\dots c_k\in H$. 
Since $k\in \B N$ is arbitrary the computation proves the statement.
\end{proof}

Let $f,g\in {\rm Homeo}_0(M)$ and let $x\in {\rm C}_n(\Delta)$ be such that $g(x),fg(x)\in {\rm C}_n(\Delta)$.
Choosing the isotopy $\{(fg)_t\}$ to be the concatenation $\{f_t\circ g\}*\{g_t\}$ 
proves the following crucial fact.

\begin{lemma}\label{L:}
The function $\gamma\colon {\rm Homeo}_0(M)\times {\rm C}_n(\Delta)\to P_n(M)$ is a cocycle. That is,
$$
\gamma(fg,x) = \gamma(f,g(x))\gamma(g,x)
$$ 
for every $f,g\in {\rm Homeo}_0(M)$ and almost every $x\in {\rm C}_n(\Delta)$.
\qed
\end{lemma}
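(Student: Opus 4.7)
The plan is to unwind the definition of $\gamma$ in \eqref{Eq:gfz1} on both sides of the cocycle identity and exhibit an explicit homotopy, relative to endpoints, between representing loops based at $z$. Fix isotopies $\{f_t\}$ and $\{g_t\}$ from the identity to $f$ and $g$ respectively, used to compute $\gamma(f,g(x))$ and $\gamma(g,x)$. Following the hint, I would build the isotopy for $fg$ as the concatenation $\{(fg)_t\}=\{f_t\circ g\}*\{g_t\}$, so that the track $\{(fg)_t(x)\}$ is, up to reparametrisation, the concatenation $\{f_t(g(x))\}*\{g_t(x)\}$ of the tracks used on the right-hand side.

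With this choice one gets
$$
\gamma(fg,x)=\bigl[\ell_{z,fg(x)}*\{f_t(g(x))\}*\{g_t(x)\}*\ell_{x,z}\bigr].
$$
On the other hand, writing out the product in $P_n(M,z)$ with the right-to-left convention yields
$$
\gamma(f,g(x))\cdot\gamma(g,x)=\bigl[\ell_{z,fg(x)}*\{f_t(g(x))\}*\ell_{g(x),z}*\ell_{z,g(x)}*\{g_t(x)\}*\ell_{x,z}\bigr].
$$
The middle factor $\ell_{g(x),z}*\ell_{z,g(x)}$ is, by construction, a concatenation of Euclidean straight-line segments in $\Delta$ with its coordinatewise reverse, hence it is null-homotopic as a loop at $g(x)$ in ${\rm C}_n(M)$ (the possible collision set is measure zero and can be avoided for generic $x$, as already noted for the definition of $\ell$). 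Cancelling this loop identifies the two representatives, which proves the identity for every $x$ such that the three braids are defined.

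The minor technical point, and the one that requires a little care rather than a deep argument, is ensuring the identity holds on a full-measure set: one needs $x,\,g(x),\,fg(x)$ all to lie in ${\rm C}_n(\Delta)$ and the three concatenating paths $\ell_{\cdot,\cdot}$ to avoid the diagonal in ${\rm C}_n(M)$. Each of these is the complement of a measure-zero set, and the collision locus of the Euclidean geodesics on $\Delta$ is again measure zero by the discussion after \cite[Section 3.2]{GambaudoPecou}; intersecting finitely many such full-measure sets gives the required full-measure subset of ${\rm C}_n(\Delta)$. The only conceptual step is the homotopy cancellation above; everything else is bookkeeping about isotopies and null-sets.
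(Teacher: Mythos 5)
Your proof is correct and follows exactly the approach the paper intends: the paper's entire ``proof'' is the remark preceding the lemma, namely that one should choose the isotopy $\{(fg)_t\}=\{f_t\circ g\}*\{g_t\}$, and you have simply unwound this hint into a full verification, including the cancellation of the back-and-forth $\ell_{g(x),z}*\ell_{z,g(x)}$ and the bookkeeping of the full-measure set where all three braids are defined.
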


\section{Proofs}

\begin{proof}[Proof of Theorem \ref{T:main}]
If $n=1$, it is known that $\varphi(\gamma(f,x))$ is bounded as a function of $x$, thus the integral \eqref{Eq:GG} is well defined 
\cite{BK-frag}. Assume therefore that $M = \Sigma$ is a surface and $n$ is arbitrary.
Let $P_n(\Delta) = \pi_1({\rm C}_n(\Delta))$. 
The inclusion $\Delta\subseteq \Sigma$ induces an inclusion $P_n(\Delta)\leq P_n(\Sigma)$, see \cite[Theorem 1]{MR334182}.
Let $f\in {\rm Homeo}_0(\Sigma)$ and let 
$\varphi\colon P_n(\Sigma)\to \B R$ be a quasimorphism which
vanishes on $P_n(\Delta)$ and on the centre of $P_n(\Sigma)$. 
Let  ${\rm sys}(\Sigma,d)$ denote the systole of $\Sigma$.
Suppose that $f$ is $C^0$-small, in the sense, that
$d_0(f,{\rm Id}) < \epsilon$
for $\epsilon$ chosen such that there exists an isotopy $f_t$, connecting the identity to $f$, such that 
$$d_0(f_t,{\rm Id}) < \frac{1}{2n}{\rm sys}(\Sigma,d)$$ 
for all $t$. Existence of such an $\epsilon$ follows from local 
contractibility of ${\rm Homeo}_0(\Sigma)$ \cite[Corollary 1.1]{zbMATH03341019}. 
For every $x$, we shall construct a decomposition $\gamma(f,x) = \alpha\cdot\beta$,
where $\beta \in P_n(\Delta)$ and $\alpha\in P_n(\Sigma)$ is a braid from a certain fixed finite set. 

\begin{figure}[h]
\centering
\begin{tikzpicture}[line width=3pt, scale=0.25]
\draw[blue] (15,5) -- (5,15);
\draw[brown] (15,5) -- (9,17);
\draw[green] (5,15) ..controls (4,18) and (8,20).. (9,17);
\filldraw[red]
(15,5) circle (7pt)
(5,15) circle (5pt)
(9,17) circle (5pt);
\draw (7,15) node {$x$};
\draw (12,17) node {$f(x)$};
\draw (17,5) node {$z$};
\draw (2,2) node {$\Delta$};
\draw(0,0) rectangle (20,20);
\end{tikzpicture}
\caption{Isotopy inside $\Delta$.}
\label{F:in-delta}
\end{figure}
Let $x=(x_1,\ldots,x_n)$. Assume first, that the evaluation
$\{f_t(x_i)\}$ of an isotopy $\{f_t\}$ is contained in $\Delta$ for all
$i=1,\ldots,n$. Then $\gamma(f,x)\in P_n(\Delta)$. 
In this case $\beta = \gamma(f,x)$ and $\alpha = e$. A simplified picture
is presented in Figure \ref{F:in-delta}. The points $z,x$ and $f(x)$
should be seen as $n$-tuples, the path connecting $z$ and $x$ (blue) denotes the union of geodesics connecting $z_i$ and $x_i$, the path connecting $f(x)$ and $z$ (brown) denotes the union of geodesics
connecting $f(x_i)$ and $z_i$, and the path between $x$ and $f(x)$ (green) denotes the union of images of the isotopy
$\{f_t\}$ evaluated at $x_i$'s which is possibly braided.

In general, since $f$ is $C^0$-small,
we can choose an isotopy $f_t$, such that for every $i$ the path $\{f_t(x_i)\}$ is contained in a ball $B_i$ centered in $x_i$, and of radius smaller than $\frac{1}{2n}{\rm sys}(\Sigma,d)$. 
Recall that 
\begin{equation}
\gamma(f,x) = \left[ \textcolor{brown}{\ell_{z,f(x)}}*\textcolor{darkgreen}{\{f_t(x)\}}*\textcolor{blue}{\ell_{x,z}} \right],
\end{equation}
 see Figure \ref{F:back-and-forth}. Our goal is to move $\{f_t(x)\}$ (the green part) inside ${\rm C}_n(\Delta)$ and modify $\ell_{z,f(x)}$ (the brown part) in a controlled way. To this end, for each $i$ we choose a point $y_i \in \Delta \cap B_i$ such that the unit speed geodesic segment $\sigma_{y_i,f(x_i)}$ connecting $f(x_i)$ to $y_i$ lies entirely in $B_i$ (in particular, we may take $y_i = x_i$). Note that some of these geodesic segments might intersect in the configuration space, but it happens only for measure zero set of points $x \in C_n(\Delta)$. Let $y = \{y_1,\ldots,y_n\}$, and let $\ell_{y,f(x)}$ denote the path in ${\rm C}_n(\Sigma)$ consisting of segments $\sigma_{y_i,f(x_i)}$. We have

\begin{equation}
\gamma(f,x) = \left[ \textcolor{brown}{\ell_{z,f(x)}}*\textcolor{brown}{\ell_{f(x),y}}*\textcolor{darkgreen}{\ell_{y,f(x)}}*\textcolor{darkgreen}{\{f_t(x)\}}*\textcolor{blue}{\ell_{x,z}} \right],
\end{equation}
where $\ell_{f(x),y}$ denotes the time-reverse of $\ell_{y,f(x)}$.

\begin{figure}[h]
\centering
\begin{tikzpicture}[line width=3pt, scale=0.25]
\draw[blue] (15,5) -- (5,15);
\draw[brown] (15,25) -- (9,22);
\draw[green] (5,15) ..controls (4,24) and (7,23).. (9,22);
\filldraw[red]
(15,5) circle (7pt)
(15,25) circle (7pt)
(5,15) circle (5pt)
(9,22) circle (5pt);
\draw (7,15) node {$x$};
\draw (12,22) node {$f(x)$};
\draw (17,5) node {$z$};
\draw (2,2) node {$\Delta$};
\draw(0,0) rectangle (20,20);
\begin{scope}[shift={(30,0)}]
\draw[blue] (15,5) -- (5,15);
\draw[brown, rounded corners] (15,25) -- (9,22) -- (9,17);
\draw[green, rounded corners] (5,15) ..controls (4,24) and (7,23).. (8.5,22) -- (8.5,17);
\filldraw[red]
(15,5) circle (7pt)
(15,25) circle (7pt)
(5,15) circle (5pt)
(8.75,17) circle (5pt);
\draw (7,15) node {$x$};
\draw (17,5) node {$z$};
\draw (9.5,15.5) node {$y$};
\draw (2,2) node {$\Delta$};
\draw(0,0) rectangle (20,20);
\end{scope}
\end{tikzpicture}
\caption{Back-and-forth.}
\label{F:back-and-forth}
\end{figure}

The path $\ell_{y,f(x)}*\{f_t(x)\}$, in green in Figure \ref{F:back-and-forth},
is contained in ${\rm C}_n(\bigcup B_i)$. Moreover, since each $B_i$ has radius less than $\frac{1}{2n}{\rm sys}(\Sigma,d)$, every connected component of $\bigcup B_i$ has diameter less than ${\rm sys}(\Sigma)$ and is therefore contractible in $\Sigma$. Consequently, we can homotope $\ell_{y,f(x)}*\{f_t(x)\}$ inside ${\rm C}_n(\Delta)$, keeping the endpoints fixed:
\begin{equation}
\gamma(f,x) = \left[ \textcolor{brown}{\ell_{z,f(x)}}*\textcolor{brown}{\ell_{f(x),y}}*\textcolor{darkgreen}{s}*\textcolor{blue}{\ell_{x,z}} \right],
\end{equation}
where $s$ denotes $\ell_{y,f(x)}*\{f_t(x)\}$ after being pushed into ${\rm C}_n(\Delta)$, 
see Figure \ref{F:back-and-forth2}. The final step is to connect the endpoint of $s$ to $z$, and return back. We have:

\begin{equation}
\gamma(f,x) = \left[ \textcolor{brown}{\ell_{z,f(x)}}*\textcolor{brown}{\ell_{f(x),y}}*\textcolor{brown}{\ell_{y,z}}*\textcolor{darkgreen}{\ell_{z,y}}*\textcolor{darkgreen}{s}*\textcolor{blue}{\ell_{x,z}} \right].
\end{equation}

\begin{figure}[h]
\centering
\begin{tikzpicture}[line width=3pt, scale=0.25]
\draw[blue] (15,5) -- (5,15);
\draw[brown, rounded corners] (15,25) -- (9,22) -- (9,17);
\draw[green, rounded corners] (5,15) ..controls (4,20) and (6,19).. (8.5,17);
\filldraw[red]
(15,5) circle (7pt)
(15,25) circle (7pt)
(5,15) circle (5pt)
(8.75,17) circle (5pt);
\draw (7,15) node {$x$};
\draw (17,5) node {$z$};
\draw (9.5,15.5) node {$y$};
\draw (2,2) node {$\Delta$};
\draw(0,0) rectangle (20,20);
\begin{scope}[shift={(30,0)}]
\draw[blue] (15,5) -- (5,15);
\draw[brown, rounded corners] (15,25) -- (9,22) -- (9,17) -- (15.5,5);
\draw[green, rounded corners] (5,15) ..controls (4,20) and (6,19).. (8.5,17) -- (15,5);
\filldraw[red]
(15.25,5) circle (7pt)
(15,25) circle (7pt)
(5,15) circle (5pt);
\draw (7,15) node {$x$};
\draw (17,5) node {$z$};
\draw (13,23) node {$\alpha$};
\draw (9,9) node {$\beta$};
\draw (2,2) node {$\Delta$};
\draw(0,0) rectangle (20,20);
\end{scope}
\end{tikzpicture}
\caption{Push into $\Delta$ and back-and-forth again.}
\label{F:back-and-forth2}
\end{figure}

We define $\beta$ to be the paths 
$\ell_{z,y}*s*\ell_{x,z}$
obtained by the concatenation of the blue and green paths, and the brown path
$\ell_{z,f(x)}*\ell_{f(x),y}*\ell_{y,z}$ comprise the braid $\alpha$. By Lemma \ref{L:fin_many_values} proven below, 
$\varphi$ is bounded on all such braids $\alpha$. 

Thus, since the quasimorphism $\varphi$ vanishes on $P_n(\Delta)$, we obtain
\begin{equation}
|\varphi(\gamma(f,x))| = |\varphi(\beta\alpha)|
\leq |\varphi(\alpha)| + D_{\varphi},
\label{Eq:bounded-fi-gamma} 
\end{equation}
and hence the function $x\mapsto |\varphi(\gamma(f,x))|$ is bounded,
and so the integral
$$\Psi(f) = \int_{{\rm C}_n(\Sigma)}\varphi(\gamma(f,x))dx$$
is well defined, i.e. $\varphi(\gamma(f,x))$ is a $L^1$-function, 
for a $C^0$-small $f$.

Let $f\in {\rm Homeo}_0(\Sigma)$ be an arbitrary homeomorphism and let
$\{f_t\}$ be an isotopy from the identity to $f$, where $t\in [0,1]$.
Let $0=t_0<t_1<t_2 <\dots < t_m=1$ be a partition of the interval $[0,1]$
such that 
$$d_0(f_{t_i}^{-1}f_{t_{i+1}},{\rm Id}) < \epsilon,$$ 
where $\epsilon$ is chosen as in the beginning of the proof, i.e., $f_{t_i}^{-1}f_{t_{i+1}}$ are $C^0$-small for all $i$. Of course,
$$
f = f_{t_0}\circ f_{t_0}^{-1}f_{t_1}\circ f^{-1}_{t_1}f_{t_2}\circ \dots \circ f_{t_{m-1}}^{-1}f_{t_m}
=g_1\dots g_m,
$$
where $g_i = f_{t_{i-1}}^{-1}f_{t_i}$.
That is, $f$ is a product of $C^0$-small homeomorphisms. We get 
\begin{align*}
\varphi(\gamma(f,x))
&=\varphi(\gamma(g_1\dots g_m,x))\\
&=\varphi(\gamma(g_1,g_2\dots g_mx)\gamma(g_2,g_3\dots g_mx)\dots \gamma(g_m,x))\\
&=\varphi(\gamma(g_1,g_2\dots g_mx))+ \varphi(\gamma(g_2,g_3\dots g_mx))+ \dots +\varphi(\gamma(g_m,x))
+ d_1+\dots +d_{m-1},
\end{align*}
where $|d_i|\leq D_{\varphi}$. 
This implies that $\varphi(\gamma(f,x))$ attains finitely many values for
almost every $x\in \Delta$. 
As before, it follows that $\Psi(f)$ is well defined.  

Conversely, assume that the integral \eqref{Eq:GG} is well defined for
all $f\in {\rm Homeo}_0(\Sigma)$. 
Suppose that $\varphi \colon P_n(\Sigma)\to \B R$
does not vanish on a braid $\beta\in P_n(\Delta)$. Let $f_1\in {\rm Homeo}_0(\Sigma)$
be a homeomorphism supported in a ball $B_1\subset \Delta$ of area 
$$0<a<\frac{1}{10}\mu(\Delta)$$
such that 
$$\gamma(f,x)=\beta^{10}$$
for $x\in B_1$ in the set of area $b>0$.
Let $f_2$ be a homeomorphism supported in a ball $B_2 \subset \Delta$ disjoint from $B_1$ and of area
$a/2$ such that 
$$\gamma(f,x)=\beta^{10^2}$$
for $x\in B_2$ in the set of area $b/2$.
By iterating this construction and taking the limiting homeomorphism 
$f=f_1\circ f_2\circ f_3\dots$ we get that the integral \eqref{Eq:GG} diverges which
contradicts our hypothesis. This shows the main statement of Theorem \ref{T:main}.
We now prove the additional statements. 

\noindent
{\bf (1)} Let $A(f) = \max\{\varphi(\gamma(f,x))\ |\ x\in {\rm C}_n(M)\}$. It is
a well defined number because $\varphi(\gamma(f,x))$ is bounded as explained in the first part of the proof.
The following computations shows that $\delta\Psi$ is semi-bounded.

\begin{align*}
|\delta\Psi(f,g)|
&=|\Psi(g)-\Psi(fg)+\Psi(f)|\\
&\leq \int_{{\rm C}_n(M)} |\varphi(\gamma(g,x)) -\varphi(\gamma(fg,x)) + \varphi(\gamma(f,x))|dx\\
&\leq \int_{{\rm C}_n(M)} \big |\varphi(\gamma(g,x)) -\varphi(\gamma(f,gx)\gamma(g,x)) + \varphi(\gamma(f,x))\big |dx\\
&\leq \int_{{\rm C}_n(M)} \left(\big |\varphi(\gamma(g,x)) -\varphi(\gamma(f,gx))-\varphi(\gamma(g,x)) + \varphi(\gamma(f,x))\big |+D_{\varphi}\right)dx\\
&\leq \int_{{\rm C}_n(M)} \left(\big | \varphi(\gamma(f,gx))- \varphi(\gamma(f,x))\big |+D_{\varphi}\right)dx\\
&\leq \int_{{\rm C}_n(M)} (2A(f)+D_{\varphi})dx <\infty.
\end{align*}

\noindent
{\bf (2)} The quasimorphism property for the restriction of $\Psi$ to 
area preserving homeomorphism is a standard computation as in \cite{zbMATH05980986}.
The $C^0$-continuity of the homogenization $\overline{\Psi}$ follows
from a theorem of Shtern \cite[Theorem 1]{zbMATH01709147}, which states that a homogeneous
quasimorphism on a topological group is continuous if and only if
it is bounded on a neighborhood of the identity. Indeed, if
$f$ is $C^0$-small then
$$
|\varphi(\gamma(f,x))| = |\varphi(\beta\alpha)| \leq |\varphi(\alpha)|+D_{\varphi},
$$
which is bounded independently of $f$. Hence $\Psi(f)$ is bounded on
a neighborhood of the identity. Since 
$$\sup_{f}|\overline{\Psi}(f)-\Psi(f)|\leq D_{\Psi},$$
we get that $\overline{\Psi}$ is also bounded on a neighborhood of the
identity and its continuity follows from Shtern's theorem.

\noindent
{\bf (3)} Assume $n=1$ and let $\gamma$ be a braid such that 
$\overline{\varphi}(\gamma) \neq 0$. Let $f$ be a point pushing homeomorphism $f$ 
along the loop $\gamma$ such that $\gamma(f,x) \in \{\gamma,e\}$, where $e$ is the trivial 
loop, for all $x$ away from a set of arbitrary small measure. 
This gives us $\overline{\Psi}(f) \neq 0$ (see, e.g., \cite[Theorem 2.5]{zbMATH07159382}). 
If $n>1$ and $\varphi$ extends to the full braid group $B_n(\Sigma)$ then the unboundedness
of $\Psi$ is proven in \cite{zbMATH06490154} extending an argument of
Ishida \cite{zbMATH06596006}.
\end{proof}

We finish this section with the technical lemma used in the proof of Theorem \ref{T:main}.

\begin{lemma}\label{L:fin_many_values}
Let $\Sigma$ be a closed surface of positive genus 
and let $\varphi$ be a quasimorphism vanishing on the centre of $P_n(\Sigma)$. 
Fix an integer $N\geq 1$. Then $\varphi$ is bounded on the set of braids $\alpha\in P_n(\Sigma)$ whose
every strand is a concatenation of at most $N$ geodesic segments, each one of
length at most $\operatorname{diam}(\Sigma)$.
\end{lemma}

\begin{proof}
Let \(Z=\{z_1,\ldots,z_n\}\) and put \(S=\Sigma\setminus Z\). Consider the point-pushing homomorphism
\[
h \colon P_n(\Sigma)\longrightarrow \operatorname{PMod}(S),
\]
where $\operatorname{PMod}(S)$ is the pure mapping class group of $S$. 
By Birman exact sequence (see, e.g., \cite{MR243519} and \cite{MR2850125}), 
the kernel of $h$ is the centre of $P_n(\Sigma)$. 
Since $\varphi$ vanishes on the centre, it is enough to 
show that the set of point-pushing homeomorphisms $h(\alpha)$, 
where $\alpha$ is as in the lemma, is finite. 

Choose a finite system $\mathcal A=\{a_1,\ldots,a_m\}$ of essential simple
closed curves and properly embedded arcs in $S$, with endpoints at the
punctures allowed, such that every component of
$S\setminus \mathcal A$ is a disc and $\mathcal A$ satisfies the assumptions of \cite[Proposition 2.8]{MR2850125}.
It means that by definition \(\mathcal A\) fills \(S\).
By the Alexander method, an element $h$ of $\operatorname{PMod}(S)$ is determined, 
up to finite ambiguity depending only on $\mathcal{A}$,
by the isotopy classes of the curves and arcs $h(a_1),\ldots,h(a_m)$.
We show that, for the point-pushing classes \(h_\alpha = h(\alpha)\) arising from
the braids \(\alpha\), each \(h_\alpha(a_j)\) belongs to a finite set of
isotopy classes.
Fix \(a_j\in \mathcal A\). The image \(h_\alpha(a_j)\) is obtained from
\(a_j\) by pushing the punctures along the strands of \(\alpha\). An example of 
$h_\alpha(a_j)$ is shown in Figure \ref{F:back-and-forth3}. Away from a
small regular neighborhood of the strands the arc or curve is unchanged.
Inside this neighborhood, every change is caused by one of the finitely many
local events that corresponds to crossings of strands with \(a_j\) 
and crossings between strands. Since the strands of $\alpha$ are 
sums of at most $N$ short geodesic segments, there exists a bound  
on the number of such crossings, and it does not depend on $\alpha$. 
Thus there is a constant \(C\), such that $i\bigl(h_\alpha(a_j),\mathcal A\bigr)\leq C$ 
for every \(j\) and for every braid \(\alpha\) as in the lemma.
\begin{center}
\begin{figure}
\begin{tikzpicture}[
    x=0.38cm,
    y=0.38cm,
    line cap=round,
    line join=round,
    bluecurve/.style={draw=blue!80!black, line width=1.1pt},
    redcurve/.style={draw=red!85!black, line width=1.1pt},
    punct/.style={circle, fill=red!85!black, inner sep=1.5pt}
]

\begin{scope}[shift={(0,0)}]
    \draw[bluecurve] (0,0) rectangle (12,17);

    \draw[bluecurve] (0,14) -- (12,14);
    \node[blue!80!black] at (2.2,15.2) {$a_j$};

    \draw[redcurve] (6,17) -- (6,4.2);
    \draw[redcurve] (12,10) -- (2,10);

    \node[punct] at (6,4.2) {};
    \node[punct] at (2,10) {};

    \node at (6,3.5) {$z_1$};
    \node at (1.7,9) {$z_2$};
\end{scope}

\begin{scope}[shift={(13,0)}]
    \draw[bluecurve] (0,0) rectangle (12,17);

    \draw[bluecurve] (0,14) -- (4,14) -- (6,1) -- (8,14) -- (12,14);
    \node[blue!80!black] at (2.2,15.2) {$a_j$};

    \draw[redcurve] (6,17) -- (6,4.2);
    \draw[redcurve] (12,10) -- (2,10);

    \node[punct] at (6,4.2) {};
    \node[punct] at (2,10) {};

    \node at (6,3.5) {$z_1$};
    \node at (1.7,9) {$z_2$};
\end{scope}

\begin{scope}[shift={(26,0)}]
    \draw[bluecurve] (0,0) rectangle (12,17);

    \draw[bluecurve] (0,14) -- (4,14) -- (4.25,12.375) -- (0.1,10) -- (5,7.625) -- (6,1) -- (7,7.625) -- (1,10) --(7.75,12.375) -- (8,14) -- (12,14);
    \node[blue!80!black] at (2.2,15.2) {$a_j$};

    \draw[redcurve] (6,17) -- (6,4.2);
    \draw[redcurve] (12,10) -- (2,10);

    \node[punct] at (6,4.2) {};
    \node[punct] at (2,10) {};

    \node at (6,3.5) {$z_1$};
    \node at (1.7,8.5) {$z_2$};
\end{scope}
\end{tikzpicture}
\caption{An example of $h_\alpha(a_j)$. The two strands have one crossing and the strand that ends at $z_1$ crosses first.}
\label{F:back-and-forth3}
\end{figure}
\end{center}

Finally, since  \(\mathcal A\) fills \(S\), 
there are only finitely many isotopy classes of essential simple closed curves and essential arcs whose
geometric intersection number with \(\mathcal A\) is at most \(C\). To see
this, cut \(S\) along \(\mathcal A\). A curve or arc in minimal position with
\(\mathcal A\) is then encoded by a finite word recording the sequence of
sides of the complementary discs which it crosses; the length of this word is
bounded by \(C\), and the alphabet is finite. Thus each
\(h_\alpha(a_j)\) has only finitely many possible isotopy classes.
It follows that the tuple 
$$\bigl(h_\alpha(a_1),\ldots,h_\alpha(a_m)\bigr)$$
has only finitely many possible values. By the Alexander method, only finitely
many mapping classes \(h_\alpha\in \operatorname{PMod}(S)\) can arise. 
Since \(\varphi\) vanishes on the centre, the set
of possible values of \(\varphi(\alpha)\) is finite.
\end{proof}

\begin{remark}
Note that for a closed hyperbolic surface $\Sigma$ there are finitely many braids in $P_n(\Sigma)$ whose
every strand is a concatenation of at most $N$ geodesic segments, each one of
length at most $\operatorname{diam}(\Sigma)$. Indeed, if $\Sigma$ is a closed hyperbolic surface, 
then $P_n(\Sigma)$ has no centre, the push-point homomorphism is injective, 
and the claim follows from Lemma \ref{L:fin_many_values}. 
\end{remark}

\section{Distortion}\label{S:distortion}

Let $z\in {\rm C}_n(M)$ be the base-point and let 
$\Psi_z\colon {\rm Homeo}_0(M)\to \B R$ 
be the composition 
$$\Psi_z(f) = \varphi(\gamma(f,z)),$$
where $\varphi\colon P_n(M)\to \B R$ is a homogeneous quasimorphism vanishing on the
centre of $P_n(M)$ (recall that if the dimension of $M$ is greater than $2$ we assume $n=1$). 
If $M$ is a surface we additionally assume that $\varphi$ vanishes on $P_n(\Delta)$. Recall that under this
assumptions the function $x\mapsto|\varphi(\gamma(f,x))|$ is bounded, according to \eqref{Eq:bounded-fi-gamma}.

\begin{proposition}\label{P:semi-bounded-z}
The differential $\delta\Psi_z$ is a semi-bounded cocycle.
\end{proposition}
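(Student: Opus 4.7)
The plan is to unwind $\delta\Psi_z(f,g)$ using the cocycle property of $\gamma$ together with the quasimorphism defect of $\varphi$, and then conclude by invoking the uniform pointwise bound on $|\varphi(\gamma(f,x))|$ established in the proof of Theorem~\ref{T:main}.

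First I apply the cocycle identity $\gamma(fg,z) = \gamma(f,g(z))\,\gamma(g,z)$ to expand
$$
\delta\Psi_z(f,g) = \varphi(\gamma(g,z)) - \varphi\bigl(\gamma(f,g(z))\,\gamma(g,z)\bigr) + \varphi(\gamma(f,z)).
$$
Next I use the quasimorphism defect inequality to replace $\varphi(\gamma(f,g(z))\gamma(g,z))$ by $\varphi(\gamma(f,g(z))) + \varphi(\gamma(g,z))$ at the cost of an error of absolute value at most $D_\varphi$. The two $\varphi(\gamma(g,z))$ terms then cancel, leaving
$$
|\delta\Psi_z(f,g)| \leq |\varphi(\gamma(f,z)) - \varphi(\gamma(f,g(z)))| + D_\varphi.
$$

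The remaining step is to bound the first term uniformly in $g$. Here I appeal to equation \eqref{Eq:bounded-fi-gamma} in the proof of Theorem~\ref{T:main}, which furnishes a constant $A(f) < \infty$ depending only on $f$ such that $|\varphi(\gamma(f,x))| \leq A(f)$ for every admissible $x$. Recall that this bound was obtained from the factorization $\gamma(f,x) = \beta\alpha$ with $\beta \in P_n(\Delta)$ (on which $\varphi$ vanishes) and $\alpha$ drawn from a finite set of braids determined by $f$; the case of arbitrary $f$ reduces to the $C^0$-small case by subdividing an isotopy. Applying this bound both at $x = z$ and at $x = g(z)$ yields
$$
|\delta\Psi_z(f,g)| \leq 2A(f) + D_\varphi,
$$
which is finite and independent of $g$. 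Taking the supremum over $g$ gives $D(f) < \infty$, as required.

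I do not anticipate any real obstacle beyond a minor definitional nuisance: if $g(z) \notin {\rm C}_n(\Delta)$ then $\gamma(f,g(z))$ is not literally given by the formula \eqref{Eq:gfz1}, but one can perturb $g(z)$ into ${\rm C}_n(\Delta)$ via a homotopy inside ${\rm C}_n(M)$ that does not alter the resulting braid class. Hence $\varphi(\gamma(f,g(z)))$ and the bound $A(f)$ remain meaningful, and all of the substance of the argument is already contained in the proof of Theorem~\ref{T:main}.
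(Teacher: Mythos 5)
Your proof is correct and follows essentially the same route as the paper: expand $\delta\Psi_z$ via the cocycle identity, absorb the quasimorphism defect as a $D_\varphi$ error, and bound the remaining difference $|\varphi(\gamma(f,z))-\varphi(\gamma(f,g(z)))|$ by twice the uniform bound $B_f=\sup_x|\varphi(\gamma(f,x))|$ coming from \eqref{Eq:bounded-fi-gamma}. The only cosmetic difference is notation ($A(f)$ versus $B_f$), and your remark about perturbing $g(z)$ into ${\rm C}_n(\Delta)$ is a reasonable clarification the paper leaves implicit.
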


\begin{proof}
Let $B_f = \sup_{x}|\varphi(\gamma(f,x))|$.
For $f,g\in {\rm Homeo}_0(M)$ we have
\begin{align*}
|\delta\Psi_z(f,g)|
&=|\varphi(\gamma(g,z)) - \varphi(\gamma(fg,z)) + \varphi(\gamma(f,z))|\\
&\leq |\varphi(\gamma(g,z)) - \varphi(\gamma(f,g(z))) - \varphi(\gamma(g,z))+\varphi(\gamma(f,z))| + D_{\varphi}\\
&=|\varphi(\gamma(f,g(z))) - \varphi(\gamma(f,z)) | + D_{\varphi}\\
&=
\begin{cases}
|\varphi(\gamma(f,g(z))-\varphi(\gamma(f,z))|+D_{\varphi} &\text{ if } g(z)\neq z\\
D_{\varphi} &\text{ if } g(z)=z
\end{cases}\\
&\leq 2B_f+D_{\varphi}.
\end{align*}
This shows that $\delta\Psi$ is $1$-bounded.
\end{proof}

As explained in the introduction, if 
$\limsup_{k\to \infty}\frac{|\Psi_z(f^k)|}{k}>0$
then $f$ is undistorted in ${\rm Homeo}_0(M)$, 
according to Theorem \ref{T:distortion}.
The following proposition is a direct application
and its proof is straightforward.

\begin{proposition}\label{P:non-fix}
Let $f\in {\rm Homeo}_0(M)$ and $x\in M$ be such that
\begin{equation}
\gamma(f^{n_k},x) = \gamma(f,x)^{n_k}
\label{Eq:non-fix}
\end{equation}
for an increasing sequence $\{n_k\}_{k\in \B N}$ of positive integers.
If $\varphi$ is a homogeneous quasimorphism on $P_n(M)$ satisfying the
condition of Theorem \ref{T:main} and
$\varphi(\gamma(f,x))>0$ then $f$ is undistorted in ${\rm Homeo}_0(M)$.
\qed
\end{proposition}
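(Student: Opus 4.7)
The plan is to imitate the construction of $\Psi_z$ from the preceding discussion, but with the given point $x$ playing the role of the base-point. Concretely, define
\[
\Psi_x \colon {\rm Homeo}_0(M) \to \B R, \qquad \Psi_x(g) = \varphi(\gamma(g,x)).
\]
The first step is to verify that $\delta\Psi_x$ is a semi-bounded $2$-cocycle. I expect the argument of Proposition \ref{P:semi-bounded-z} to transfer verbatim: that computation only invoked the defect bound of $\varphi$, the cocycle identity $\gamma(fg, y) = \gamma(f, g(y))\gamma(g, y)$, and the uniform-in-$y$ bound $B_f = \sup_y |\varphi(\gamma(f,y))| < \infty$ established in the proof of Theorem \ref{T:main}. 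None of these ingredients singled out the particular choice of base-point $z$, so the same chain of inequalities gives $|\delta\Psi_x(f, g)| \leq 2B_f + D_\varphi$ for all $g$, i.e.\ $\delta\Psi_x$ is $1$-bounded.

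The second step is to use the standing hypothesis to compute $\Psi_x$ on powers of $f$. Homogeneity of $\varphi$ together with the identity $\gamma(f^{n_k}, x) = \gamma(f, x)^{n_k}$ gives
\[
\Psi_x(f^{n_k}) = \varphi\bigl(\gamma(f,x)^{n_k}\bigr) = n_k \, \varphi(\gamma(f,x)).
\]
Since $\varphi(\gamma(f,x)) > 0$ by assumption and $\{n_k\}$ is an increasing sequence of positive integers, this yields
\[
\limsup_{k\to\infty}\frac{|\Psi_x(f^k)|}{k} \;\geq\; \lim_{k\to\infty}\frac{|\Psi_x(f^{n_k})|}{n_k} \;=\; \varphi(\gamma(f,x)) \;>\; 0.
\]
Combining the first step with this inequality, Theorem \ref{T:distortion} applies to $\psi = \Psi_x$ and immediately concludes that $f$ is undistorted in ${\rm Homeo}_0(M)$.

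There is no genuine obstacle, in line with the paper's remark that the proof is a direct application; the only subtlety worth checking is that $x$ lies in the full-measure set on which $\gamma(\cdot, x)$ is well defined, which is implicit in the hypothesis that $\gamma(f^{n_k}, x)$ makes sense and equals $\gamma(f,x)^{n_k}$ for every $k$.
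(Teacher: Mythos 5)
Your proof is correct and is exactly the ``direct application'' the paper has in mind; the paper itself gives no proof, only the remark that the argument is straightforward. The only thing worth emphasizing is the observation you make in your first step: the proof of Proposition \ref{P:semi-bounded-z} nowhere uses that $z$ is the distinguished base-point of $P_n(M,z)$, only the cocycle identity from Lemma \ref{L:}, the defect bound, and the uniform bound $B_f=\sup_y|\varphi(\gamma(f,y))|<\infty$ from \eqref{Eq:bounded-fi-gamma}, so $\delta\Psi_x$ is $1$-bounded for any admissible $x$. Combined with homogeneity of $\varphi$ along the subsequence $\{n_k\}$ and Theorem \ref{T:distortion}, this closes the argument; your final remark about $x$ lying in the full-measure domain of $\gamma(\cdot,x)$ is the right caveat and is indeed implicit in the hypothesis.
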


\begin{example}\label{E:fix-point}
The hypothesis \eqref{Eq:non-fix} of the above proposition is immediate if
$x\in M$ is a fixed point of $f$. If not then there exists a homeomorphism
$s\colon M\to M$ and its isotopy $\{s_t\}$ to the identity such that
$t\mapsto s_t(f(x))$ is equal to $\ell_{x,f(x)}$. It follows that
$x$ is a fixed point of $sf$ and 
$$\gamma(sf,x) = \gamma(f,x).$$
Moreover, 
$$\gamma\left( (sf)^k,x \right) = \gamma(sf,x)^k = \gamma(f,x)^k.$$
If $\varphi(\gamma(f,x))>0$ then $sf$ is undistorted, according to
Proposition \ref{P:non-fix}.
\hfill $\diamondsuit$
\end{example}

The above example shows that the dynamics of distorted homeomorphisms is often
very restricted, see also \cite{zbMATH06252351}. Let us discuss a family of examples.  We say that a group $G$
is quasi-residually real 
if for every non-trivial element $g\in G$ there exists a
homogeneous quasimorphism $\varphi\colon G\to \B R$ such that $\varphi(g)\neq
0$. Examples of such groups include right-angled Artin groups, pure braid
groups and many hyperbolic groups \cite{zbMATH06532523}.

\begin{corollary}\label{C:qrr}
Let $M$ be a closed manifold with quasi-residually real 
fundamental group $\pi_1(M)$ with trivial centre. Let $f\in {\rm Homeo}_0(M)$
be a distorted element.
Then every fixed point $x\in M$ of $f$ is contractible.
That is, the loop $\{f_t(x)\}$ is contractible for every choice
of isotopy from the identity to $f$.
In particular, given a fixed point $x$ of $f$, there is an isotopy from the identity to $f$
fixing $x$.
\end{corollary}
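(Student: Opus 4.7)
The plan is to argue by contradiction via the contrapositive: if $f \in \mathrm{Homeo}_0(M)$ has a fixed point $x$ such that $\{f_t(x)\}$ is non-contractible for some isotopy, then $f$ must be undistorted. First, since $Z(P_n(M))$ is trivial, the lemma preceding equation \eqref{Eq:gfz1} guarantees that $\gamma(f,x)\in P_n(M)$ is independent of the choice of isotopy. When $f(x)=x$, the loop portion $\{f_t(x)\}$ of $\gamma(f,x)$ is a genuine loop at $x$, and $\gamma(f,x)$ is its conjugate by $\ell_{x,z}$, so non-contractibility of $\{f_t(x)\}$ translates to $\gamma(f,x)\neq e$ in $P_n(M)$.

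Next, I invoke quasi-residual realness: there is a homogeneous quasimorphism $\varphi\colon P_n(M)\to\B R$ with $\varphi(\gamma(f,x))\neq 0$, and after possibly replacing $\varphi$ by $-\varphi$ we may assume $\varphi(\gamma(f,x))>0$. The hypothesis that $\varphi$ vanish on the centre is automatic since $Z(P_n(M))=1$, and in the setting of Corollary~\ref{C:distortion-fix} the additional requirement that $\varphi$ vanish on $P_n(\Delta)$ also holds for free in the higher-dimensional case because $n=1$ forces $P_1(\Delta)=\pi_1(\text{ball})=1$. Corollary~\ref{C:distortion-fix} then applies directly: $f$ is undistorted in $\mathrm{Homeo}_0(M)$, contradicting the hypothesis. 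This proves that every fixed point $x$ of a distorted $f$ has contractible loop for every isotopy, the ``every'' coming from the isotopy-independence of $\gamma(f,x)$.

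For the final clause about simultaneously fixing a tuple $(x_1,\ldots,x_k)$ of fixed points, the first part shows each loop $\{f_t(x_i)\}$ is contractible in $M$. In the higher-dimensional case the pairwise diagonals in $M^k$ have codimension at least $3$, so $\pi_1(\mathrm{C}_k(M))\cong\pi_1(M)^k$, and the tuple loop is therefore null-homotopic in $\mathrm{C}_k(M)$. Lifting this null-homotopy through the evaluation fibration $\mathrm{Homeo}_0(M)\to\mathrm{C}_k(M)$ adjusts the isotopy $\{f_t\}$ into one fixing $(x_1,\ldots,x_k)$ pointwise throughout. The main technical obstacle is precisely this last lifting step — verifying local triviality of the evaluation map in the topological (as opposed to smooth) category and handling the surface case — but with the pi_1 input from the first two steps it reduces to standard facts about the Birman exact sequence.
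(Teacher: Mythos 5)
Your proof of the main statement (every fixed point of a distorted $f$ is contractible) follows essentially the same route as the paper's: argue the contrapositive, use quasi-residual realness to produce a homogeneous quasimorphism $\varphi$ with $\varphi(\gamma(f,x))>0$, and invoke Corollary~\ref{C:distortion-fix}. Your observations that the centre condition is automatic and that $P_1(\Delta)$ is trivial (so the $P_n(\Delta)$-vanishing hypothesis is vacuous when $n=1$, in fact in any dimension, not just $\dim M\geq 3$) are correct and match the paper.

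Where you diverge is the ``in particular'' clause about fixing tuples, and here there is a real gap. The paper's proof applies the same quasi-residual-realness argument \emph{directly} to the full tuple: it shows $\gamma\big(f,(x_1,\ldots,x_n)\big)=e$ in $P_n(M)$, and then uses the long exact sequence of the evaluation fibration, with the connecting homomorphism $\partial\colon P_n(M)\to\pi_0\big({\rm Homeo}_0(M,x_1,\ldots,x_n)\big)$ being an isomorphism (trivial centre plus connectedness of ${\rm Homeo}_0(M)$), to deduce $[f]=\partial(e)=e$. You instead prove triviality of each coordinate $\gamma(f,x_i)\in\pi_1(M)$ and then try to assemble these into triviality of the tuple loop in $C_k(M)$ via $\pi_1(C_k(M))\cong\pi_1(M)^k$. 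That isomorphism holds only for $\dim M\geq 3$; for surfaces with $k>1$ the map $P_k(\Sigma)\to\pi_1(\Sigma)^k$ has nontrivial kernel (the normal closure of $P_k(\Delta)$, as noted in the paper's first remark), so triviality of each $\gamma(f,x_i)$ does \emph{not} give $\gamma\big(f,(x_1,\ldots,x_k)\big)=e$. Your appeal to the Birman exact sequence does not repair this: the obstruction lives precisely in that kernel, and the Birman sequence controls the kernel of forgetting a strand, not the kernel of $i_*$. The paper's uniform treatment in $P_n(M)$ sidesteps the problem entirely. (The authors' follow-up remark does indicate they have $\dim M\geq 3$ in mind for the $n>1$ case, which is exactly the regime where your argument is valid, so the gap is in generality rather than in the intended application.) Finally, your explicit null-homotopy lifting is a correct but more laborious way of extracting what the paper gets in one line from $\partial$ being an isomorphism; either way one needs the evaluation map to have the homotopy lifting property, which in the topological category is supplied by the local contractibility results the paper already cites.
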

\begin{proof}
Let ${\rm Homeo}_0(M,x)=\{g\in {\rm Homeo}_0(M)\ |\ g(x)=x\}$.
Consider the evaluation fibration 
$$
{\rm Homeo}_0(M,x) \to {\rm Homeo}_0(M)\stackrel{{\rm ev}}\longrightarrow \pi_1(M),
$$
where ${\rm ev}(g) = g(x)$. 
It follows from a theorem of Gottlieb \cite[Theorem I.4]{zbMATH03239188} that the
image of the homomorphism induced by the evaluation on the fundamental group is
contained in the center of $\pi_1(M,x)$. Since this center is
trivial and the group ${\rm Homeo}_0(M)$ is connected, the connecting homomorphism
$$
\partial\colon \pi_1(M,x)\to \pi_0({\rm Homeo}_0(M,x))
$$ 
is an isomorphism
of groups. Since $f$ is distorted and $\pi_1(M)$ is quasi-residually real,
it follows from Proposition \ref{P:non-fix} that $\varphi(\gamma(f,x))=0$ for
every homogeneous quasimorphism on $\pi_1(M)$. Since the latter group is quasi-residually
real, it follows that
the element $\gamma(f,x)$ is trivial. This implies that $f$ belongs
to the connected component of the identity of ${\rm Homeo}_0(M,x)$ and hence
there is an isotopy from the identity to $f$ fixing $x$.
\end{proof}


\begin{remark}
If we drop the hypothesis about trivial centre of $\pi_1(M)$ then the element
$\gamma(f,x)$ is not well defined and it depends on the choice of isotopy
$\{f_t\}$ from the identity to $f$. However, the conclusion in this case
is that the loop $\{f_t(x)\}$ represents a central element
of $\pi_1(M)$.
\end{remark}

A homeomorphism $f$ of a manifold $M$ is called {\it recurrent}
if there exists an increasing sequence $\{n_k\}_{k=1}^\infty$ of natural numbers
such that 
$$\lim_{k\to \infty} d_0(f^{n_k},{\rm Id}) = 0.$$
In other words, arbitrary large powers of $f$ are arbitrarily close to the
identity. For example, if $f$ is periodic then it is obviously recurrent.
Another example is an irrational rotation of the circle.  The only recurrent
diffeomorphisms of surfaces of higher genus are periodic, according to \cite{zbMATH01202977}.

\begin{proposition}\label{P:recurrent}
Let $f\in {\rm Homeo}_0(M)$ be a recurrent homeomorphism of
a closed manifold. Suppose that $\pi_1(M,z)$ has trivial centre.
If $z \in M$ is a fixed point of $f$, then
$\gamma(f,z)\in \pi_1(M,z)$ is torsion.
\end{proposition}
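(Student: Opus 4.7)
The plan is to combine recurrence with local contractibility of $\mathrm{Homeo}_0(M)$ to show that every sufficiently large power $\gamma(f,z)^{n_k}$ can be represented by a loop too short to be non-contractible, hence is trivial in $\pi_1(M,z)$. Throughout, we work in the $n=1$ setting, so $P_n(M) = \pi_1(M,z)$, and the triviality of the centre ensures that the element $\gamma(g,z) \in \pi_1(M,z)$ is independent of the choice of isotopy from the identity to $g$ (cf.\ Section \ref{S:preliminaries}).

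First, I would record two basic facts. Since $z$ is a fixed point of $f$, the definition \eqref{Eq:gfz1} reduces to $\gamma(f,z) = [\{f_t(z)\}]$, and the cocycle relation (Lemma \ref{L:}) gives $\gamma(f^k, z) = \gamma(f,z)^k$ for every $k \in \mathbb{N}$. Next, by the local contractibility theorem for $\mathrm{Homeo}_0(M)$ \cite[Corollary 1.1]{zbMATH03341019}, for every $r > 0$ there exists $\epsilon > 0$ such that any $g \in \mathrm{Homeo}_0(M)$ with $d_0(g, \mathrm{Id}) < \epsilon$ admits an isotopy $\{g_t\}$ from the identity to $g$ with $d_0(g_t, \mathrm{Id}) < r$ for every $t$.

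The core of the argument is then as follows. Fix $r < \tfrac{1}{2}\mathrm{sys}(M,d)$, and let $\epsilon$ be as above. Recurrence supplies an increasing sequence $\{n_k\}$ with $d_0(f^{n_k}, \mathrm{Id}) < \epsilon$ for $k$ large. Choosing for each such $k$ an isotopy $\{g_t^{(k)}\}$ from $\mathrm{Id}$ to $f^{n_k}$ staying $r$-close to the identity, the loop $\{g_t^{(k)}(z)\}_{t \in [0,1]}$ is based at $z$ (because $f^{n_k}(z) = z$) and lies inside the open metric ball $B(z,r)$. Since $B(z,r)$ has diameter less than the systole, every loop in it is null-homotopic in $M$, so this loop represents the trivial element of $\pi_1(M,z)$. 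Independence from the choice of isotopy, coupled with $\gamma(f^{n_k},z) = \gamma(f,z)^{n_k}$, yields $\gamma(f,z)^{n_k} = e$ for some $n_k > 0$, which is exactly the torsion conclusion.

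The main technical input is the local contractibility result with the quantitative control on how close the isotopy stays to the identity; this is classical, so no hard step remains once it is invoked. A minor subtlety to verify carefully is the independence of $\gamma(\cdot, z)$ from the isotopy under the trivial-centre hypothesis, which is what allows us to pass freely between the isotopy implicit in $\gamma(f,z)^{n_k}$ and the short isotopy produced by local contractibility.
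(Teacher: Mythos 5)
Your proof is correct and takes exactly the same route as the paper: use the fixed-point condition together with the cocycle relation to get $\gamma(f^k,z)=\gamma(f,z)^k$, then invoke recurrence to find a power $f^{n_k}$ that is $C^0$-close to the identity, and conclude $\gamma(f^{n_k},z)=1$. The paper states this last step without elaboration; you supply the missing detail (local contractibility of ${\rm Homeo}_0(M)$ gives a short isotopy, and the resulting short loop based at $z$ lies in a ball of radius less than half the systole, hence is null-homotopic), which is the same systole argument the paper uses elsewhere in the proof of Theorem~\ref{T:main}.
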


\begin{proof}
Since $z$ is a fixed point of $f$, we have $\gamma(f^k,z)=\gamma(f,z)^k$.
On the other hand, since $f$ is recurrent, there exists 
$k_0\in \B N$ such that $f^{k_0}$ is $C^{0}$-close to
the identity. Hence $\gamma(f^{k_0},z)=1$. 
\end{proof}

Militon proved that every recurrent {\it diffeomorphism} of a closed
manifold $M$ is distorted \cite{zbMATH06161942}. The recurrence of a
diffeomorphism is meant here with respect to the $C^{\infty}$-topology.

\bibliographystyle{plain}
\bibliography{bibliography}

\end{document}